\documentclass[a4paper,12pt]{amsart}
\usepackage{geometry}
\usepackage[utf8]{inputenc}
\usepackage[english]{babel}
\usepackage{amsmath}
\usepackage{amssymb}
\usepackage{amsthm}
\usepackage{commath}
\usepackage{mathtools}
\usepackage{hyperref}
\usepackage{tikz}
\usepackage{tikz-cd}
\usepackage{fancyhdr}
\usepackage{quiver}
\usepackage{bm}

\theoremstyle{plain}
\newtheorem{thm}{Theorem}[section]
\newtheorem{prop}[thm]{Proposition}
\newtheorem{conj}[thm]{Conjecture}
\newtheorem{lemma}[thm]{Lemma}

\newtheorem{corollary}[thm]{Corollary}
\newtheorem*{thm*}{Theorem}

\theoremstyle{definition}
\newtheorem{defn}[thm]{Definition}
\newtheorem*{defn*}{Definition}

\newtheorem{example}[thm]{\underline{Example}}

\theoremstyle{remark}

\newtheorem{remark}[thm]{\underline{Remark}}

\DeclareMathOperator{\End}{End}
\DeclareMathOperator{\Rad}{Rad}
\DeclareMathOperator{\rk}{rk}
\DeclareMathOperator{\Gr}{Gr}
\DeclareMathOperator{\st}{\; | \;}

\newcommand{\N}{\mathbb{N}}
\renewcommand{\P}{\mathbb{P}}
\newcommand{\Z}{\mathbb{Z}}
\newcommand{\Q}{\mathbb{Q}}
\newcommand{\R}{\mathbb{R}}
\newcommand{\C}{\mathbb{C}}

\newcommand{\mcal}[1]{\mathcal{#1}}

\newcommand\restr[2]{#1 \raisebox{-.5ex}{$|$}_{#2}}

\numberwithin{equation}{section}

\DeclareMathOperator{\Mat}{Mat}
\DeclareMathOperator{\codim}{codim}

\newcommand{\Qbar}{\overline{\Q}}
\newcommand{\diff}{\text{d}}
\newcommand{\Gm}{\mathbb{G}_\mathrm{m}}
\newcommand{\Gcal}{\mathcal{G}}
\newcommand{\Ccal}{\mathcal{C}}
\NewDocumentCommand {\Hvar} {m o} {
    \IfNoValueTF {#2}%
    { H_{\vect{#1}} }%
    {H_{\vect{#1}, #2}}%
}

\newcommand{\vect}[1]{\underline{#1}} 

\title{Singular intersections in split semiabelian varieties}
\author{Francesco Ballini}
\address[Francesco Ballini]{London, UK}
\email{francescoballini1996@gmail.com}

\author{Laura Capuano}
\address[Laura Capuano]{Department of Mathematics and Physics,
         Roma Tre University \\
         Largo San Leonardo Murialdo 1,
         00146 Rome, Italy}
\email{laura.capuano@uniroma3.it}

\author{Nicola Ottolini}
\address[Nicola Ottolini]{Institut de mathématiques, 
    Université de Neuchâtel \\
    Rue Emile-Argand 11,
    2000 Neuchâtel, Switzerland}
\email{nicola.ottolini@unine.ch}
\email{nicolaotto@outlook.it}
\date{}

\keywords{Unlikely Intersections, tangency, algebraic tori,
Abelian varieties}
\subjclass{11G10, 11G05, 11G50, 14C17}

\begin{document}

\begin{abstract}
    Let $\mathcal G$ be a split semiabelian variety of dimension $n \ge 2$ and let $\mathcal C \subset \mathcal G$ an irreducible curve, where everything is defined over $\Qbar$. We prove that, under the natural hypothesis that $\mathcal C$ is not contained in any proper algebraic subgroup of $\mathcal G$, the set of points $P$ of $\mathcal C$ lying in an algebraic subgroup $H$ which intersects the curve tangentially at $P$ is finite. This generalizes a result for curves in 2-dimensional tori by March\'e and Maurin.  We apply these results to square-freeness of certain geometric divisibility sequences. 
\end{abstract}
\maketitle
\section{Introduction}

One instance of the Zilber-Pink conjecture, proposed independently by Bombieri, Masser and Zannier \cite{BMZ99} and Zilber \cite{Zilber2002} for tori and by Pink \cite{Pink2005} in the more general framework of mixed Shimura varieties, asserts that, if $V$ is an irreducible subvariety of a semiabelian variety $G$ not contained in any proper algebraic subgroup, then the intersection of $V$ with the union of the algebraic subgroups of $G$ of codimension strictly bigger than the dimension of $V$ is not Zariski-dense in $V$. 
The conclusion is motivated by dimension considerations: indeed, if we consider a subvariety $V$ and an algebraic subgroup $H$, we expect that the intersection will be empty if $\dim V+\dim H < \dim G$; 
consequently, if we consider the intersection of $V$ with all the algebraic subgroups satisfying this condition, then we expect that this intersection will be ``small'' in $V$ unless there is a geometric reason for this not to be the case, and in this case we speak of an ``unlikely intersection''. 
The conjecture is still open in full generality; however, for the case of curves, it is known due to works of Bombieri, Masser and Zannier \cite{BMZ99, BMZ08} and Maurin \cite{Maurin08} in the case of tori, of Habegger and Pila \cite{HabeggerPila} and Barroero and Dill \cite{BD22} in the case of abelian varieties, and of Barroero, Kuhne and Schmidt \cite{BKS} in the case of curves in semiabelian varieties, when everything is defined over $\Qbar$ (see also \cite{BHMZ, CMPZ, Viada2003, Viada2008, Remond2003, Galateau2010, BCMOS} for partial results in these contextes). For more references about these problems, see \cite{Zannier2012, Pila2022}.
\medskip

Let us now consider, for an irreducible curve $\mathcal C$ in a semiabelian variety $G$, the intersections between the curve and all the algebraic subgroups of codimension equal to one.
This set of ``just likely intersections'' has been considered in the literature, also in more general cases, usually proving that it is (analytically or Zariski) dense (see, for example, \cite{ACZ20, Gao20a, Gao21, G23, TT23, BKU24, ES25, DG25}).
However, under the more restrictive hypothesis that the curve is not contained in a translate of a proper algebraic subgroup and everything is defined over $\Qbar$, the set is still ``sparse'', i.e. is a set of bounded height (see \cite{BMZ99, Viada2003}).
%

Looking at this set, one can study the geometry of the intersections, i.e. one can look at the multiplicities of intersection between the curve and the subgroups. 
The general expectation, in this case, is that ``for most subgroups'' the intersection between the curve and the subgroup will be transverse. 
This problem has already been considered by March\'e and Maurin \cite{Maurin23} in the case of a curve in a two-dimensional torus, and in the context of one parameter families of abelian varieties by Corvaja, Demeio, Masser, Zannier \cite{CDMZ21}, Ulmer and Urzua \cite{UU20, UU21} and Ottolini \cite{Ottolini}.
The results by Ulmer and Urzua were later extended to positive characteristic, using different techniques, by Ulmer and Voloch \cite{UlmerVoloch25}.
Outside algebraic groups, similar questions have also been studied in the modular setting, where theorems in the same direction have been proven by Spence \cite{Spence2019}, Aslayan \cite{Aslanyan2022} and Ballini \cite{BalliniTesi}.

In this work, we prove that the expectation is actually true when one considers a curve in a split semiabelian variety, and everything is defined over $\Qbar$. 
Our main result is the following.

\begin{thm}\label{thm:split_semiab}
Let $\Gcal$ be a split semiabelian variety of dimension $n\ge 2$ and let $\Ccal \subseteq \Gcal$ be an irreducible curve defined over $\Qbar$ and suppose that $\Ccal$ is not contained in a proper algebraic subgroup.
Consider the set $\Ccal^{[1, sing]}$ of points $P \in \Ccal$ such that there is a proper algebraic subgroup $H$ such that:
\begin{enumerate}
\item $P$ lies in $\Ccal \cap H$;
\item the tangent space $T_P\Ccal$ of $\Ccal$ at $P$ is included in the tangent space $T_P H$ of $H$ at $P$ (namely, the curve $\Ccal$ is tangent to $H$ at $P$).
\end{enumerate}
Then, $\Ccal^{[1, sing]}$ is a finite set.
\end{thm}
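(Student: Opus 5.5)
The strategy is to turn tangency into an unlikely intersection in a larger split semiabelian variety, and then apply the curve case of Zilber--Pink for semiabelian varieties (Barroero--Kühne--Schmidt \cite{BKS}), together with a bounded-height argument.

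\textbf{Reduction to codimension one.} Write $\Gcal = \Gm^t \times A$ with $A$ an abelian variety. We may assume $H$ has codimension one. Indeed, any proper algebraic subgroup is contained in a codimension-one subgroup $H'$, and $T_PH \subseteq T_PH'$, so tangency to $H$ implies tangency to $H'$. A codimension-one subgroup is the kernel of a surjective homomorphism $\phi=(\chi,\psi)\colon \Gcal\to \Gm$ or $\Gcal\to E$, where $E$ is a simple quotient of $A$ (or, more generally, it contains a connected component of such a kernel). Tangency at $P$ then means $P\in\ker\phi$, up to torsion translates, and $d\phi_P(T_P\Ccal)=0$.

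\textbf{Lifting to the tangent bundle.} Consider $\Gcal\times \mathrm{Lie}(\Gcal)$, or better the curve $\Ccal' \subset \Gcal\times\Gcal$ given as the image of $P\mapsto (P, \exp\text{-type logarithmic derivative})$. Concretely, I would parametrize $\Ccal$ locally, and use the logarithmic derivative $\omega(P)\in \mathrm{Lie}(\Gcal)$ of a uniformizer direction. The condition $d\phi_P(T_P\Ccal)=0$ becomes linear in $\phi$, with coefficients given by the invariant differentials evaluated along $\Ccal$. In the toric factor this is the condition $\sum a_i\, dx_i/x_i=0$ on $T_P\Ccal$, where $a\in\Z^t$; in the abelian factor it is an analogous condition via $d\psi$. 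So the tangent point $P$ lies in $\ker\phi$, and its tangent direction, viewed as a point of $\P(\mathrm{Lie}\,\Gcal)$, lies in the hyperplane $\P(\ker d\phi)$. Two independent conditions indexed by the same $\phi$ yield codimension two, which is unlikely for a curve.

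\textbf{Bounded height plus a Zilber--Pink input.} By \cite{BMZ99, Viada2003} the points of $\Ccal\cap\bigcup_{\codim H\ge1}H$ have bounded height, since $\Ccal$ is not in a translate of a proper subgroup. (If $\Ccal$ lies in a translate $gB$, we first handle the reduction separately: translate and work in $B$, where the hypothesis of not lying in a proper subgroup still forces the conditions to be nondegenerate.) There remains to show that $\deg\phi$ is bounded. The strategy is the one of March\'e--Maurin: the tangency equation $\sum a_i\,\omega_i(P)=0$, where the $\omega_i$ are invariant differentials restricted to $\Ccal$ at $P$, has algebraic coefficients $\omega_i(P)$ of bounded height. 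The relation $P\in\ker\phi$ gives a multiplicative/abelian relation of $P$. I would combine the two relations as follows. For fixed $P$, the set of $\phi$ vanishing at $P$ is a lattice $\Lambda_P$ of rank $r$. Tangency says some nonzero element of $\Lambda_P$ lies in the hyperplane orthogonal to the vector $\omega(P)$. Using height bounds for $P$ (Masser-type bounds on generators of the relation lattice) and Liouville estimates on the linear form $\sum a_i\omega_i(P)$, one should obtain that $\phi$ can be chosen of bounded degree, given $[\Q(P):\Q]$ bounded. A degree bound for $P$ is then needed; I would obtain it by applying \cite{BKS} to the curve $\{(P,P^\sigma\ldots)\}$, or more cleanly by passing to the subgroup $\ker\phi\times\ker d\phi$ inside $\Gcal\times \mathbb{G}_a^n$. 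Once $\phi$ ranges over a finite set, each fixed $\phi$ gives finitely many points, since $d\phi|_{\Ccal}\not\equiv 0$ (else $\Ccal$ would lie in a coset of $\ker\phi$).

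\textbf{Main obstacle.} The hard step is bounding $\deg\phi$ uniformly. Tangency couples the algebraic-group condition with a differential (linear-algebraic) condition, and the relevant ambient object, $\Gcal\times\mathbb{G}_a^n$, is not semiabelian, so \cite{BKS} does not apply directly. I would first attempt a height/degree argument: bound $[\Q(P):\Q]$ via Galois orbits of torsion cosets, as in Maurin's method, and then use the fact that $\omega(P)$ is a fixed algebraic vector of bounded height to force the lattice vector $a$ into a bounded region.
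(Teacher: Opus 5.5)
Your proposal has two genuine gaps, and they sit exactly where the work lies.

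\textbf{The degree bound.} You flag this as the main obstacle, and it is never established. Your lattice/Liouville step is conditional on $[\Q(P):\Q]$ being bounded. Of your two suggested sources for that bound, one is vague (the curve $\{(P,P^\sigma,\dots)\}$) and the other, as you note yourself, does not apply (\cite{BKS} in the non-semiabelian $\Gcal\times\mathbb{G}_a^n$). The missing idea is that the object you already wrote down, the tangent direction of $P$ in $\P(\mathrm{Lie}\,\Gcal)$, i.e.\ the Gauss map $\gamma_\Ccal$, gives the bound by rationality plus B\'ezout. After reducing to $G^n$ with $G=\Gm$ or $E$, tangency to a coset of $\ker\varphi_{\vect p}$ forces $\gamma_\Ccal(P)\in L_{\vect p}=\{p_1z_1+\dots+p_nz_n=0\}$. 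This hyperplane is defined over a fixed number field $K\supseteq\End(G)$. Moreover $\gamma_\Ccal(\Ccal)\not\subseteq L_{\vect p}$, since otherwise $d\varphi_{\vect p}$ vanishes on $\Ccal$ and $\Ccal$ lies in a coset of $\ker\varphi_{\vect p}$. Hence $\gamma_\Ccal(P)$ lies in a $\mathrm{Gal}(\overline{K}/K)$-stable set of at most $\deg\gamma_\Ccal(\Ccal)$ points, and $[K(P):K]\le\deg\gamma_\Ccal(\Ccal)\cdot\deg\gamma_\Ccal$, uniformly over all codimension-one cosets. With the height bound of \cite{BMZ99, Viada2003} and Northcott, this settles the case where $\Ccal$ lies in no proper coset. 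Bounding $\deg\phi$ is unnecessary there.

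\textbf{The coset case.} Your parenthetical treatment of the case where $\Ccal$ lies in a translate $gB$ but in no proper subgroup is not an argument. After translating into $B$, the subgroups become non-torsion cosets (the paper's $\vect\xi$-special hypersurfaces $\varphi_{\vect a',d}(\vect y)=\varphi_{-\vect a'',k}(\vect\xi)$), and the bounded-height results of \cite{BMZ99, Viada2003} fail for such intersections. This is where the paper does its real work, in three steps.
\begin{enumerate}
\item Since $\gamma_\Ccal(P)\in L_{\vect p}\cap\gamma_\Ccal(\Ccal)$, and points cut on a fixed curve by a hyperplane have height linear in the height of the hyperplane (via symmetric powers and Silverman's lower bound for heights under finite maps), one gets $h(P)\le\delta_2(1+h(\vect p))$.
\item If the relation module $\Gamma$ of $(P,\vect\xi)$ has rank $\ge2$, finiteness follows from the Zilber--Pink-type theorems of \cite{Maurin08, Remond2003, Galateau2010}.
\item If $\Gamma$ has rank one, Masser's bound \cite{Masser88} gives a relation with coefficients polynomial in $[K(P):K]$ and $1+h(P)$. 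This relation is automatically a tangency relation, because $\Gamma_{sing}=F\Gamma_{sing}\cap\Gamma$ with $F$ the fraction field of $\End(G)$.
\end{enumerate}
Combining these with the uniform degree bound gives $h(P)\le c_1+c_2\log(1+h(P))$, hence bounded height. Your ``Masser-type bounds on the relation lattice'' resemble step (3), but you use them where the height is already bounded and for a different purpose.

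\textbf{Reduction to codimension one.} A smaller error: it is false that every proper subgroup of $\Gcal$ lies in a codimension-one subgroup (e.g.\ a finite subgroup of a simple abelian surface). Intersections with subgroups of codimension $\ge2$ must be discarded via \cite{BKS}, as the paper does.
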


Notice that this statement is completely trivial if $n=1$: in this case, $\Ccal=\Gcal$, and the only subgroups of codimension 1 are torsion points.
However, the tangent space of points is trivial, so the set $\Ccal^{[1, sing]}$ is 
empty, as the tangent space to any point of the curve is non-trivial.

The assumption that $\Ccal$ is not contained in a proper algebraic subgroup is 
necessary: if $\Ccal$ were contained in a proper subgroup,
say $H$, then any point $P \in \Ccal = \Ccal \cap H$ would satisfy the two conditions above.

\medskip
This theorem has some consequences also in the study of singular intersections for curves in abelian schemes.
Ottolini \cite[Theorem 1.4]{Ottolini}, using techniques substantially different from those adopted in the present paper, proved the finiteness of the set of singular intersection points between a curve not contained in any proper subgroup scheme and the subgroup schemes of codimension at least 1, under the additional assumption that the abelian scheme does not contain any isotrivial elliptic component.
Theorem \ref{thm:split_semiab}, in the special case where $\Gcal$ is a power of an elliptic curve, allows us to drop this condition.

\medskip

To prove this result, since the statement is invariant under isogenies (see Lemma \ref{lemma:invariant_isogeny}), by Poincar\'e Reducibility Theorem \cite[Corollary A.5.1.8]{Hindry2000}, we can assume that $\mathcal G$ is the product of
a torus and powers of pairwise non-isogenous simple abelian varieties. 
By \cite[Theorem 1.1]{BKS}, the set of points of intersection between the curve $\Ccal$ and the algebraic subgroups of codimension bigger or equal to $2$, without considering the tangency condition, is finite; 
hence, we only need to consider the intersections of $\Ccal$ with algebraic subgroups of condimension 1. 
This in turn allows us to reduce ourselves to the cases in which $\Gcal=G^n$ is a power of either $G=\Gm$ or of an elliptic curve $G$ defined over $\Qbar$. 

Since everything is defined over $\Qbar$, the points that lie in the intersections between the curve and proper algebraic subgroups are algebraic; hence, to prove finiteness, it is enough by Northcott property to prove a bound on the height and on the degree of the points. 
To do this, we first prove Theorem \ref{thm:split_semiab} under the additional condition that the curve $\Ccal$ is not contained in any proper algebraic coset of $\Gcal$. 
Under this assumption, results of Bombieri, Masser and Zannier \cite{BMZ99} for the toric case and Viada \cite{Viada2003} for the elliptic case give the bound for the height of the points of singular intersection between the curve and proper algebraic subgroups.
Moreover, under the same assumption, we are able to get a bound for the degree of the points of singular intersection between the curve and \emph{all proper cosets} (see Proposition \ref{prop:degree}),
using the Gauss map and Bezout's Theorem.
We point out that, for the points of singular intersection between the curve and the proper algebraic subgroups, such a bound on the degree is \emph{independent} of the bound on the height.
This is substantially different from more typical proofs of unlikely intersections type of results, such as the ones mentioned above.
Having a bound for the height and for the degree allows us to conclude.

To prove the general case, we can assume that the curve $\Ccal$ is contained in a proper coset $W$, but not in a proper subgroup.
Moreover, we can assume $W$ to be maximal and, after performing a suitable isogeny, that it is of the form $G^d \times \{ \vect{\xi} \}$, for a tuple $\vect{\xi} \in G^{n-d}$ of independent elements.
By projecting on the first $d$ components, we can turn our starting problem into studying the points of singular intersection between a curve not contained in any proper coset and one-codimensional algebraic cosets of a special shape, which we will call \emph{$\vect{\xi}$-special subvarieties}.
This reduction is analogous to the ones performed in \cite{Remond2003, Pink2005, Maurin08}.
The same bound for the degree from the previous case applies to these points as well, so to prove finiteness, it is enough to get a bound for the height, as the ones from Bombieri, Masser and Zannier and Viada do not apply here.
To prove the desired bound, we employ substantially different techniques from the ones used by Bombieri, Masser and Zannier and Viada.
First, we use the Gauss map and some properties of the height with respect to morphisms between projective varieties to control the logarithmic height of the points of singular intersection between the curve $\Ccal$ and a $\vect{\xi}$-special subvariety $U$ in terms of the logarithmic height of the coefficients of $U$.
A lattice generator bound due to Masser \cite{Masser88} ensures that, for every singular intersection point $P$, we can choose a $U$ with coefficients that are small with respect to the degree of $P$ and the logarithmic height of $P$.
Combining these two bounds with the degree bound leads to the desired height bound, thus concluding the proof.
An alternative argument for the first step, for $\Gm^n$ and elliptic curves without complex multiplication, has also been provided by the first named author in his PhD thesis \cite[\S 5.5]{BalliniTesi} using Puiseux series.

\medskip
We conclude this section by showing, analogously to what observed by Marché and Maurin \cite{Maurin23}, and Ulmer and Urz{\'u}a \cite{UU20, UU21}, how the contents of Theorem \ref{thm:split_semiab}
can be reframed as a statement about unlikely intersections in a different ambient variety.
Keeping the same notation, we will denote by $\Gcal$ a split semiabelian variety of dimension $n$.
Let $T\Gcal$ be the tangent bundle of $\Gcal$, and $\P T\Gcal$ the associated projective bundle;
thus, $\P T\Gcal \to \Gcal$ is a $\P^{n-1}$-bundle
whose fibers parametrize lines in the corresponding tangent space, and the total space $\P T\Gcal$ is a smooth quasi-projective variety of dimension $2 n - 1$.

Given a curve $\Ccal \subseteq \Gcal$, there is a canonical lift of $\Ccal$ to $\widetilde{\Ccal} \subseteq \P T\Gcal$, constructed in the following way.
The inclusion $\iota: \Ccal \to \Gcal$ induces a map $\tilde{\iota}:\Ccal^{sm} \to \P T\Gcal$
from the smooth locus of $\Ccal$, sending a point $P \in \Ccal^{sm}$ to the class of its tangent line $T_P \Ccal \subseteq T_P \Gcal$ in $\P T\Gcal$.
We take $\widetilde{\Ccal}$ to be the closure of the image of this map.
Note that, in this way, $\widetilde{\Ccal}$ is also a curve.

In a similar way, for $W\subseteq\Gcal$ a smooth subvariety we can define $\widetilde{W} \subseteq \P T\Gcal$ as 
\[
\widetilde{W} = \{(P, [\Delta])\st P \in W, \Delta \subseteq T_{P}W \}.
\]
By this construction, $\dim \widetilde{W} = 2 \dim W -1$.

A curve $\Ccal$ and a subvariety $W$ intersect at a point $P$ tangentially if and only if their lifts $\widetilde{\Ccal}$ and $\widetilde{W}$ intersect at a point in $\P T\Gcal$ over $P$.
Now, whenever $\dim W \leq \dim \Gcal - 1$, we have $\dim \widetilde{W} \leq \dim  \P T \Gcal - 2$, so the intersection between $\widetilde{W}$ and $\widetilde{\Ccal}$ is unlikely.

We now recast Theorem \ref{thm:split_semiab} using this new interpretation.
\begin{thm}
\label{thm:split_semiab_reformulated}
Let $\Gcal$ be a split semiabelian variety of dimension $n \ge 2$, and let $\Ccal$ be an irreducible subvariety of $\Gcal$ of dimension 1, not contained in any proper algebraic subgroup, where everything is defined over $\Qbar$.
Let $\widetilde{\Ccal}$ and, for any $W \subseteq \Gcal$ smooth subvariety, $\widetilde{W}$ be as above.
Then, the set
\[
\widetilde{\Ccal}^{[2]}:= \bigcup_{\substack{H < \Gcal\\ \dim H \leq \dim \Gcal - 1}} \widetilde{\Ccal} \cap \widetilde{H} \subseteq \P T\Gcal,
\]
where $H$ varies through the proper algebraic subgroups of $\Gcal$, is a finite set.
\end{thm}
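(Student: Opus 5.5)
Theorem \ref{thm:split_semiab_reformulated} will be deduced directly from Theorem \ref{thm:split_semiab}. The idea is to show that the projection $\pi:\P T\Gcal\to\Gcal$ sends $\widetilde{\Ccal}^{[2]}$ into the union of $\Ccal^{[1,sing]}$ with the finite singular locus of $\Ccal$. We then check that $\pi$ has finite fibres on $\widetilde{\Ccal}$.

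First we describe the lifts concretely. An algebraic subgroup $H<\Gcal$ may be reducible. Its connected components are translates $x+H^0$ of the identity component $H^0$, so each component is smooth, and $\widetilde{H}$ is understood componentwise (equivalently, $H$ itself is smooth). Let $(P,[\Delta])$ be a point of $\widetilde{\Ccal}\cap\widetilde{H}$. Then $P=\pi(P,[\Delta])\in\pi(\widetilde{\Ccal})$, and $\pi(\widetilde{\Ccal})$ is contained in the closure of $\Ccal^{sm}$, which is $\Ccal$. Hence $P\in\Ccal\cap H$ and $\Delta\subseteq T_PH$.

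Next, suppose $P$ is a smooth point of $\Ccal$. We claim that the fibre of $\widetilde{\Ccal}$ over $P$ is the single point $[T_P\Ccal]$. The lifting map $\tilde\iota$ is a morphism on $\Ccal^{sm}$, and it is a section of $\pi$ there. Its graph is therefore closed in $\pi^{-1}(\Ccal^{sm})$. So the closure $\widetilde{\Ccal}$ meets $\pi^{-1}(\Ccal^{sm})$ exactly in $\tilde\iota(\Ccal^{sm})$. Consequently $\Delta=T_P\Ccal\subseteq T_PH$, which means $P\in\Ccal^{[1,sing]}$ with the same subgroup $H$.

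The remaining case is $P\in\Ccal\setminus\Ccal^{sm}$, which is a finite set. Over each such point, the fibre of $\widetilde{\Ccal}\to\Ccal$ is finite. Indeed, $\widetilde{\Ccal}$ is an irreducible curve, and it is not contained in a fibre of $\pi$ because it dominates $\Ccal$. Its intersection with the closed set $\pi^{-1}(P)$ is therefore a proper closed subset of a curve, hence finite. Equivalently, these are the branches of $\Ccal$ at $P$, computed via the normalization, and their limiting tangent lines.

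Putting the two cases together gives
\[
\widetilde{\Ccal}^{[2]}\subseteq \tilde\iota\big(\Ccal^{[1,sing]}\cap\Ccal^{sm}\big)\cup \pi^{-1}\big(\Ccal\setminus\Ccal^{sm}\big)\cap\widetilde{\Ccal}.
\]
Both sets on the right are finite: the first by Theorem \ref{thm:split_semiab}, the second by the fibre argument above. This proves the theorem.

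The substance lies entirely in Theorem \ref{thm:split_semiab}. The only delicate point here is the treatment of the singular points of $\Ccal$, where $\widetilde{\Ccal}$ may contain several points. That is handled by the finiteness of the fibres together with the finiteness of the singular locus. Note that the singular points do not need to lie in $\Ccal^{[1,sing]}$ at all for this argument to work.
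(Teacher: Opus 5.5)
Your proposal is correct and follows the paper's approach: the paper presents Theorem \ref{thm:split_semiab_reformulated} as a direct recasting of Theorem \ref{thm:split_semiab}, based on the observation that $\Ccal$ is tangent to $H$ at $P$ exactly when $\widetilde{\Ccal}$ and $\widetilde{H}$ meet over $P$. Your deduction carries out that recasting carefully, and it handles the finitely many singular points of $\Ccal$ (where the lift may have several points over $P$) using the finiteness of the fibres of $\widetilde{\Ccal}\to\Ccal$, a step the paper's ``if and only if'' leaves implicit.
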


This reformulation lends itself to some generalizations, which could be interesting to study in the future.

First of all, one may ask whether Theorem \ref{thm:split_semiab} holds for more general semiabelian varieties.
The structure of subgroups here is more complicated compared to the abelian and toric settings, so some of the arguments of this paper do not apply directly.
It should be however possible to complement those with the arguments of \cite{BKS} to recover the claim.

The formulation as unlikely intersections suggests that a generalization of Theorem \ref{thm:split_semiab_reformulated} for higher dimensional subvarieties may hold, analogue to the Zilber-Pink conjecture in Pink's formulation (see \cite{Pink2005}).
\begin{conj}
\label{conj:sing_inters_const_higher_dim}
Let $\Gcal$ be a semiabelian variety, and let $V$ be an irreducible smooth subvariety of $\Gcal$, not contained in any proper algebraic subgroup, where everything is defined over $\C$.
Let, for any $W\subseteq\Gcal$ subvariety, $\widetilde{W}$ be the lifting of $W$ to $\P T\Gcal$ and let
\[
\widetilde{V}^{[2 \dim V]}:= \bigcup_{\substack{H < \Gcal\\ \dim H \leq \dim \Gcal - \dim V}} \widetilde{V} \cap \widetilde{H} \subseteq \P T\Gcal;
\]
then, the projection of $\widetilde{V}^{[2 \dim V]}$ onto $V$ is non-Zariski dense in $V$.
\end{conj}

We remark that in our proof of Theorem \ref{thm:split_semiab} it is crucial (in two distinct places) that we know that the Zilber-Pink conjecture is true for curves in semiabelian varieties defined over $\Qbar$.
Therefore, we expect that proving Conjecture \ref{conj:sing_inters_const_higher_dim} should be at least as difficult as proving the Zilber-Pink conjecture.
On the other hand, one could hope to have a conditional proof of Conjecture \ref{conj:sing_inters_const_higher_dim}.

\subsection*{AI statement}
No AI has been used in the writing of this paper, other than for grammatical and spell checking reasons.

\section{Reduction steps}
In this section, we reduce the proof of Theorem \ref{thm:split_semiab} to the case of powers of an elliptic curve or split algebraic tori.
To do so, we start by showing that our problem is invariant under isogenies.
\begin{lemma}
\label{lemma:invariant_isogeny}
Let $\Gcal$ and $\Gcal'$ be split semiabelian varieties, and let $f: \Gcal' \to \Gcal$ be an isogeny.
Moreover, let $\Ccal \subseteq \Gcal$ be a curve satisfying the assumptions of Theorem \ref{thm:split_semiab}.
Then, if the claim of Theorem \ref{thm:split_semiab} holds for all irreducible components of $f^{-1}(\Ccal)$, it holds for $\Ccal$.
\end{lemma}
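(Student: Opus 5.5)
Let $\Ccal'_1,\dots,\Ccal'_k$ be the irreducible components of $f^{-1}(\Ccal)$. The plan has three steps: check that each $\Ccal'_i$ satisfies the hypotheses of Theorem \ref{thm:split_semiab}, show that $\Ccal^{[1,sing]} \subseteq \bigcup_i f\bigl(\Ccal_i'^{[1,sing]}\bigr)$, and use that a finite union of images of finite sets is finite.

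\textbf{Hypotheses on the components.} Since $f$ is finite and surjective, $f^{-1}(\Ccal)$ has pure dimension $1$, so each $\Ccal'_i$ is a curve defined over $\Qbar$. Moreover $f$ maps each $\Ccal'_i$ onto $\Ccal$: the image is closed, irreducible, of dimension $1$, and contained in $\Ccal$. Suppose some $\Ccal'_i$ lay in a proper algebraic subgroup $H'$ of $\Gcal'$. Then $\Ccal = f(\Ccal'_i) \subseteq f(H')$. Here $f(H')$ is an algebraic subgroup with $\dim f(H') = \dim H' < \dim \Gcal$, because $\ker f$ is finite. This contradicts the hypothesis on $\Ccal$, so each $\Ccal'_i$ is not contained in a proper algebraic subgroup.

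\textbf{Inclusion of singular sets.} Let $P \in \Ccal^{[1,sing]}$, witnessed by a proper algebraic subgroup $H$ with $P\in H$ and $T_P\Ccal \subseteq T_P H$. Set $H' = f^{-1}(H)$. This is an algebraic subgroup of $\Gcal'$ with $\dim H' = \dim H < \dim \Gcal'$, so it is proper. Pick $P' \in f^{-1}(P)$ and a component $\Ccal'_i$ through $P'$. Then $P' \in \Ccal'_i \cap H'$.

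For tangency, the key point is that $f$ is étale, since it is an isogeny in characteristic $0$. Hence $df_{P'}: T_{P'}\Gcal' \to T_P\Gcal$ is an isomorphism. Because $H' = f^{-1}(H)$ and $f$ is étale, $f$ is a local isomorphism from $H'$ near $P'$ to $H$ near $P$, so $df_{P'}(T_{P'}H') = T_P H$.

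Since $f$ is étale, $f^{-1}(\Ccal) \to \Ccal$ is étale, and so $f^{-1}(\Ccal)$ is locally isomorphic at $P'$ to $\Ccal$ at $P$. In particular only the one component $\Ccal'_i$ passes through $P'$ locally analytically, up to the branch structure of $\Ccal$ itself. We get $df_{P'}(T_{P'}\Ccal'_i) \subseteq T_P\Ccal$, where the tangent spaces may be Zariski tangent spaces or tangent cones if $P$ is singular. Together with the previous paragraph this gives $T_{P'}\Ccal'_i \subseteq df_{P'}^{-1}(T_P H) = T_{P'}H'$.

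Therefore $P' \in \Ccal_i'^{[1,sing]}$, so $P \in f\bigl(\Ccal_i'^{[1,sing]}\bigr)$. By assumption each $\Ccal_i'^{[1,sing]}$ is finite, and there are finitely many $i$, so $\Ccal^{[1,sing]}$ is finite.

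\textbf{Main obstacle.} The step that needs care is the tangency transfer at singular points of $\Ccal$, where $T_P\Ccal$ should be read as the Zariski tangent space, which is also how the statement can be read. The intended argument is that étaleness of $f$ identifies the complete local rings $\widehat{\mathcal O}_{\Ccal,P}$ and $\widehat{\mathcal O}_{f^{-1}(\Ccal),P'}$. Then the Zariski tangent space of the component satisfies $T_{P'}\Ccal'_i \subseteq T_{P'} f^{-1}(\Ccal)$, and $df_{P'}$ maps $T_{P'} f^{-1}(\Ccal)$ isomorphically onto $T_P\Ccal$. Alternatively, $\Ccal$ has only finitely many singular points, so they can simply be added to the finite set and the argument run only at smooth points.
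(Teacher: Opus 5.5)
Your proposal is correct and follows the same idea as the paper. The paper's proof consists only of the observation that an isogeny in characteristic $0$ is étale, hence an analytic local isomorphism, and therefore preserves the tangency conditions. You make that explicit: you pull back $H$ to $f^{-1}(H)$, use $df_{P'}(T_{P'}\Ccal'_i)\subseteq T_P\Ccal$ together with the bijectivity of $df_{P'}$, and conclude by counting finitely many images. Your aside about how many components of $f^{-1}(\Ccal)$ pass through $P'$ is not needed anywhere in the argument.
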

\begin{proof}
The proof follows from the fact that every isogeny over fields of charachteristic 0 is (analytically) a local isomorphism.
Hence, they preserve the tangency conditions required by the statement of Theorem \ref{thm:split_semiab}.
\end{proof}

Using Poincaré Reducibility Theorem (see \cite[Corollary A.5.1.8]{Hindry2000}), we can then assume that the split semiabelian variety $\Gcal$ of Theorem \ref{thm:split_semiab} is of the form 
\[
\Gcal = \Gm^{k} \times \prod_{i=1}^l A_i^{n_i},
\]
where the $A_i$ are pairwise non-isogenous, simple abelian varieties.
For ease of notation, we will adopt the index $0$ for the toric component, and we will write $A_0^{n_0}$ instead of $\Gm^k$.
We recall that (see, for example, \cite[Section 5]{MW_85}) all the algebraic subgroups of this split semiabelian variety $\mathcal G$ are contained in subgroups of the same dimension and of the form
\[
H=H_0 \times H_1 \times \dots \times H_l,
\]
where $H_0$ is a subgroup of $A_0^{n_0}=\Gm^k$ and $H_i$ is a subgroup of $A_i^{n_i}$.
Moreover, all subgroups of codimension at least one will be contained in a subgroup of this form where $H_i=A_i^{n_i}$ for all but one index $i$.

By Theorem 1.1 of \cite{BKS}, the points of intersection between the curve $\Ccal$ and the algebraic subgroups of codimension bigger or equal to 2, without considering the tangency condition, are finite.
Hence, to prove Theorem \ref{thm:split_semiab}, we only need to intersect the curve $\Ccal$ with the subgroups of codimension one, so we can assume that they are of the form
\begin{equation}
\label{eqn:max_subgroup}
H=A_0 ^{n_0} \times \dots \times A_{i_\star-1}^{n_{i_\star-1}} \times H_{i_\star} \times A_{i_\star + 1}^{n_{i_\star+1}} \times \dots \times A_l^{n_l},
\end{equation}
where $H_{i_\star} < A_{i_\star}^{n_{i_\star}}$ is a subgroup of codimension $1$.
This immediately implies that $A_{i_\star}$ is either $\Gm$ (i.e. $i_\star = 0$) or an elliptic curve, and that $n_{i_\star}$ is at least $1$, as in every factor $A_i^{n_i}$ with $n_i$ at least 1 the minimal possible codimension of a subgroup is the dimension of the simple abelian variety $A_i$.
To prove Theorem \ref{thm:split_semiab} it is enough to show that, for every $i_\star$, the set of points of singular intersection between $\Ccal$ and subgroups as in \eqref{eqn:max_subgroup} is finite.

Suppose from now on that $i_\star$ is fixed and that $A_{i_\star}$ is either $\Gm$ (i.e. $i_\star = 0$) or an elliptic curve.
Let $\pi:\Gcal \to A_{i_\star}^{n_{i_\star}}$ be the projection map and consider the image $\pi(\Ccal) \subseteq A_{i_\star}^{n_{i_\star}}$.

It may happen that $\pi(\Ccal)$ is just a point. 
If this happens, then such a point cannot be contained in a proper subgroup of $A_{i_\star}^{n_{i_\star}}$, since otherwise the curve $\Ccal$ would be contained in a proper subgroup of $\Gcal$, contradicting the assumptions of Theorem \ref{thm:split_semiab}.
On the other hand, if $P$ is a point of intersection between $\Ccal$ and a subgroup as in \eqref{eqn:max_subgroup}, then $\pi(P) \in H_{i_\star}$.
Therefore, we conclude that, under the assumption of Theorem \ref{thm:split_semiab}, if $\pi(\Ccal)$ is a point, then $\Ccal$ does not intersect subgroups as in \eqref{eqn:max_subgroup}.

Suppose then from now on that $\pi(\Ccal)$ is a curve and denote by $\tilde{\pi}$ the restriction of $\pi$ to $\Ccal$.
Using an argument similar to the previous paragraph, under the assumptions of Theorem \ref{thm:split_semiab}, the curve $\pi(\Ccal)$ is not contained in any proper subgroup of $A_{i_\star}^{n_{i_\star}}$.
Now, let $H$ be as in \eqref{eqn:max_subgroup} and let $P \in \Ccal \cap H$ be a singular intersection point. 
We will show that such points are finitely many.
Since $\tilde{\pi}$ is a finite map between curves, it is enough to show that there are only finitely many possible values for $\pi(P)$.
It may happen that $P$ is a singular point for $\mathcal{C}$, or that $\pi(P)$ is a singular point for $\pi(\mathcal{C})$.
Since these points are finitelly many, we can (and will) assume that $P$ is a smooth point of $\mathcal{C}$ and that $\pi(P)$ is a smooth point of $\pi(\mathcal{C})$.

Notice that $H_{i_\star}=\pi(H)$ and $\pi(P) \in \pi(\Ccal) \cap H_{i_\star}$.
We have two cases:
\begin{enumerate}
\item The differential $d \tilde{\pi}_P$ at $P$ is the trivial map.
This is the same as saying that $P$ is a ramification point for the map $\tilde{\pi}$.
Since this is a non-constant map between curves, it admits only finitely many ramification points.
\item The differential $d \tilde{\pi}_P$ at $P$ is an isomorphism.
Hence, the image of $d \tilde{\pi}_P$, that is, $d \pi_P(T_P\Ccal)$, is exactly equal to the tangent space of $\pi(\Ccal)$ at $\pi(P)$.
Moreover, since by assumption $T_P \Ccal \subseteq T_PH$, we have that $d\pi_P(T_P \Ccal ) \subseteq d\pi_P(T_PH)= T_P H_{i_\star}$.
In other terms, $\pi(P)$ is a point of singular intersection between $\pi(\Ccal)$ and $H_{i_\star}$.
The finiteness of these points is ensured by 
the following result.
\end{enumerate}

\begin{thm}\label{thm:main}
Let $G$ be $\Gm$ or $E$ for some elliptic curve $E$ defined over $\Qbar$.
Let $n\ge 2$ be an integer, $\Gcal = G^n$, and let $\Ccal \subseteq \Gcal$ be a curve defined over $\Qbar$ and suppose that $\Ccal$ is not contained in a proper algebraic subgroup.
Consider the set $\Ccal^{[1, sing]}$ of points $P \in \Ccal$ such that there is a one-codimensional algebraic subgroup $H$ such that:
\begin{enumerate}
\item $P$ is in $\Ccal \cap H$;
\item the tangent space $T_P\Ccal$ of $\Ccal$ at $P$ is included in the tangent space $T_PH$ of $H$ at $P$ (namely, the curve $\Ccal$ is tangent to $H$ at $P$).
\end{enumerate}
Then, $\Ccal^{[1, sing]}$ is a finite set.
\end{thm}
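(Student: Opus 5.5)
We follow the strategy outlined in the introduction: since all points in $\Ccal^{[1,sing]}$ are algebraic, by Northcott's theorem it suffices to bound uniformly both the degree $[\Q(P):\Q]$ and the height $h(P)$ of singular intersection points. The argument splits into two cases.

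\textbf{Case 1: $\Ccal$ is not contained in any proper coset of $G^n$.} Here the height bound for $\Ccal\cap\bigcup_{\codim H\ge 1} H$ is already available: for $G=\Gm$ it is due to Bombieri--Masser--Zannier \cite{BMZ99}, and for $G=E$ to Viada \cite{Viada2003}. For the degree we use the Gauss map. A codimension-one subgroup $H$ is, up to finite index, the kernel of a homomorphism $\vect a\colon G^n\to G$ with $\vect a\in\Z^n$ (or $\vect a\in\End(E)^n$). Its tangent space at every point is the hyperplane $\vect a\cdot \mathrm dz=0$ in the invariant coordinates. Let $\gamma\colon\Ccal^{sm}\to\P^{n-1}$ send $P$ to the tangent direction of $T_P\Ccal$ expressed in invariant differentials; this map is algebraic. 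Tangency to $H$ then says exactly that $\gamma(P)$ lies in the hyperplane $\{\vect a\cdot x=0\}$.

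Since $\Ccal$ lies in no coset, $\gamma$ is not contained in any rational hyperplane. (If it were, integrating $\vect a\cdot\mathrm dz=0$ along $\Ccal$ would place $\Ccal$ in a coset of $\ker\vect a$.) The key point is that the condition has no dependence on the position of $H$, only on $\vect a$. Consider the incidence
\[
\{(P,\vect a): \gamma(P)\cdot\vect a=0\}.
\]
For a fixed $\vect a$, the number of tangency points of $\Ccal$ with all cosets of $\ker\vect a$ is bounded by Bezout's theorem in terms of $\deg\gamma$, independently of $\vect a$. I plan to upgrade this to a bound on $[\Q(P):\Q]$ as follows. Conjugating $P$ by a Galois automorphism $\sigma$ gives another point tangent to a coset of the same $\ker\vect a$, because $\vect a$ is defined over a fixed number field (the field of definition of $\End(E)$). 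The number of such conjugates is therefore bounded by Bezout, which bounds the degree. This is the content of Proposition~\ref{prop:degree}, proved for all cosets. Combining the degree bound with the height bound gives finiteness in this case.

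\textbf{Case 2: $\Ccal$ lies in a maximal proper coset $W$ but in no proper subgroup.} After an isogeny, permitted by Lemma~\ref{lemma:invariant_isogeny}, we may take $W=G^d\times\{\vect\xi\}$ with $\vect\xi$ independent, and project to $G^d$. The image curve $\Ccal'$ lies in no coset. The subgroups $H$ now become $\vect\xi$-special cosets
\[
U=\{\vect a\cdot x=\vect b\cdot\vect\xi\},
\]
and tangency is preserved. The degree bound of Case 1 still applies, since it holds for all cosets, so it remains to bound heights. This is the main obstacle, because the results of \cite{BMZ99} and \cite{Viada2003} do not cover cosets.

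My plan is as follows. First, from $\gamma(P)\in\vect a^\perp$ together with the functoriality of heights under the morphism $\gamma$, bound $h(P)$ by $O(\log\|\vect a\|)$, roughly linearly in the height of the coefficients of $U$. Second, use Masser's lattice bound \cite{Masser88}. The relations $\vect a$ with $\gamma(P)\cdot\vect a=0$ and $\vect a\cdot P'\in\langle\vect\xi\rangle$ form a lattice, and this lattice has a nonzero element of norm polynomial in $[\Q(P):\Q]$ and $h(P)$, via bounds for linear forms in heights. Since the degree is bounded, this yields $h(P)\le c\log(1+h(P))+c'$, which bounds $h(P)$. Northcott's theorem then concludes.

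One gap remains in the second step: the relations may produce $\vect b\neq 0$ while $U$ degenerates. I would handle this degenerate situation by invoking \cite{BKS} on codimension-$2$ intersections, as the paper indicates.
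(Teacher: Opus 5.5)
Your overall architecture is the paper's: the Gauss map plus Bézout for the degree, BMZ/Viada for heights when $\Ccal$ lies in no coset, reduction to $\vect\xi$-special hypersurfaces, and then $h(P)\ll 1+\log\|\vect a\|$ played against Masser. One slip: you need $W$ to be a coset of \emph{minimal} dimension containing $\Ccal$, otherwise $\Ccal'$ may still lie in a proper coset of $G^d$.

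The genuine gap is in your second step. Masser's theorem \cite{Masser88} gives a small nonzero element of the full relation module $\Gamma=\{(\vect a,\vect b):\varphi_{\vect a,d}(P)=\varphi_{-\vect b,k}(\vect\xi)\}$. It does not give one in the sublattice you wrote down, which is cut out by the extra linear condition $\gamma(P)\cdot\vect a=0$. That condition has coefficients of height comparable to $h(P)$. So if $\Gamma$ has rank at least $2$, a Siegel-lemma argument inside $\Gamma$ only produces a tangent relation with $\log\|\vect a\|\ll 1+h(P)$, rather than $\ll\log(1+h(P))$. Combined with $h(P)\le\delta_2(1+\log\|\vect a\|)$ this is circular and bounds nothing. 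Hence the inequality $h(P)\le c\log(1+h(P))+c'$ does not follow as stated.

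The missing idea is a dichotomy on $\rk\Gamma$.

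If $\rk\Gamma\ge 2$, then $(P,\vect\xi)$ lies in an algebraic subgroup of codimension at least $2$ of $G^{d+k}$. Since $\Ccal\times\{\vect\xi\}$ lies in no proper subgroup, Zilber--Pink for curves leaves only finitely many such $P$. You can use \cite{BKS}, or in the $\vect\xi$-special formulation Maurin \cite{Maurin08}, R\'emond--Viada \cite{Remond2003} and Galateau \cite{Galateau2010}. This, not a degeneration of $U$, is where the codimension-$2$ input is actually needed.

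If $\rk\Gamma=1$, observe that tangency of $\Ccal$ at $P$ to $\{\varphi_{\vect a,d}(\vect y)=\varphi_{-\vect b,k}(\vect\xi)\}$ depends only on the hyperplane $\vect a\cdot z=0$ in $T_0G^d$. That hyperplane is unchanged when $(\vect a,\vect b)$ is scaled by $F^\times$, where $F$ is the fraction field of $\End(G)$. Hence the nonzero module of tangent relations satisfies $\Gamma_{sing}=F\Gamma_{sing}\cap\Gamma=\Gamma$. In other words, every relation of $P$ is automatically tangent, and Masser's small element of $\Gamma$ is the one you need. Its $\vect a$-part is nonzero by independence of $\vect\xi$, so the case $\vect a=0$, $\vect b\neq 0$ you worried about never arises.
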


\section{Structure of subgroups and cosets}  
\label{sec:structure}
\subsection{Multiplicative case}
Let $G = \Gm$ and let $n$ be a fixed positive integer. 
We want to characterize the groups and the cosets of $\Gm^n$.
This topic is fairly classical, and can be found, for example, in \cite[Section 4.2.3]{Zannier2014}.
We recall it here mostly to establish some notation and definitions.

Since $\End(\Gm)=\Z$, the group homomorphisms $\varphi: \Gm^n \to \Gm^d$ correspond to matrices in $\Mat_{d\times n}(\Z)$, as follows: given a matrix $A=(a_{i, j})$, it induces a homomorphism
\[
\varphi_A : (x_1, \dots, x_n) \mapsto \Big( (x_1^{a_{1, 1}} \cdots x_n^{a_{1, n}}), \dots, (x_1^{a_{d, 1}} \cdots x_n^{a_{d, n}}) \Big).
\]
It is easy to check that this correspondence is compatible with compositions, that is, if $A \in \Mat_{m\times n}(\Z)$ and $B \in \Mat_{d\times m}(\Z)$ then $\varphi_B \circ \varphi_A = \varphi_{BA}$.
The following proposition allows us to give a full classification, in terms of these homomorphisms, of the algebraic subgroups of $\Gm^n$ and of the algebraic cosets, that is, the transates of algebraic subgroups.
\begin{prop} \
\begin{enumerate}
\item All the algebraic subgroups of $\Gm^n$ of codimension $d$ are of the form $\ker(\varphi_A)$ for some $A \in \Mat_{d\times n}(\Z)$ of maximal rank.
\item Let $H \subseteq \Gm^n$ be an irreducible component of an algebraic subgroup. Then, $H=\omega T$, where $T$ is an irreducible algebraic subgroup of $\Gm^n$ and $\omega$ is a torsion point.
\item Let $W \subseteq \Gm^n$ be an irreducible component of an algebraic coset, i.e. a translate of an algebraic subgroup. Then, $W = c T$,  where $T$ is an irreducible algebraic subgroup and $c$ is any point in $\Gm^n$.
\end{enumerate}
\end{prop}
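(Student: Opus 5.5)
The plan is to prove the three items in order, deducing (2) and (3) from (1) together with the structure of the connected components of kernels. The basic tool is the classical correspondence between algebraic subgroups of $\Gm^n$ and subgroups of the character lattice $\Z^n$.

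For (1), I start from an algebraic subgroup $H\subseteq \Gm^n$ of codimension $d$. Consider its ideal $I(H)\subseteq \Qbar[x_1^{\pm1},\dots,x_n^{\pm1}]$. Since $H$ is stable under the action of $H$ by translation, and characters of a diagonalizable group are linearly independent (Dedekind/Artin), $I(H)$ is generated by binomials $x^{\vect{a}}-1$. Consequently $H=\bigcap_{\vect{a}\in\Lambda}\ker(x^{\vect{a}})$, where
\[
\Lambda=\{\vect{a}\in\Z^n \st x^{\vect{a}}\equiv 1 \text{ on } H\}
\]
is a subgroup of $\Z^n$. A lattice $\Lambda$ of rank $r$ has $\dim \bigcap_{\Lambda}\ker = n-r$: after Smith normal form we may write $\Lambda=\bigoplus_i m_i\Z e_i$ in a suitable basis, and the intersection of kernels becomes $\mu_{m_1}\times\dots\times\mu_{m_r}\times\Gm^{n-r}$ after an automorphism $\varphi_U$ with $U\in GL_n(\Z)$. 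Hence $r=d$. Taking $A$ to be the $d\times n$ matrix whose rows form a $\Z$-basis of $\Lambda$ gives $H=\ker\varphi_A$, and $A$ has maximal rank $d$.

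For (2), I use the Smith normal form $A=V D U$, with $U\in GL_n(\Z)$, $V\in GL_d(\Z)$ and $D$ diagonal with entries $m_1,\dots,m_d$. This gives $\ker\varphi_A=\varphi_U^{-1}\big(\mu_{m_1}\times\dots\times\mu_{m_d}\times\Gm^{n-d}\big)$. The irreducible components are then the images under $\varphi_U^{-1}$ of $\{\zeta\}\times\Gm^{n-d}$ with $\zeta$ torsion, that is $\omega T$, where $T=\varphi_U^{-1}(\{1\}\times\Gm^{n-d})$ is an irreducible (connected) algebraic subgroup and $\omega=\varphi_U^{-1}(\zeta,1,\dots,1)$ is torsion.

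For (3), a coset is $cH'$ with $H'$ an algebraic subgroup. Translation by $c$ is an isomorphism of varieties, so its components are $c$ times the components of $H'$, namely $c\,\omega T=(c\omega)T$. Any point $c'$ of the component satisfies $c'T=(c\omega)T$, so $c$ may be taken to be any point of $W$.

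The main obstacle is the first step of (1), namely showing that $I(H)$ is spanned by binomials, equivalently that $H$ is cut out by characters. This is where I would argue carefully: take $f=\sum c_{\vect a}x^{\vect a}\in I(H)$ of minimal support among polynomials not in the binomial ideal. Translating by $h\in H$ gives $f(hx)\in I(H)$. The polynomial $f(hx)-h^{\vect{a}_0}f(x)$ then has smaller support, so it lies in the binomial ideal. Grouping the monomials of $f$ by their restrictions to $H$ as characters then shows that $f$ lies in the binomial ideal. Everything else is lattice bookkeeping.
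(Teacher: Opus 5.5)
Your proof is correct. The paper itself gives no proof of this proposition: it recalls the statement as classical, with a pointer to Zannier's notes, only to fix notation. Your argument is the standard textbook one.
\begin{itemize}
\item The ideal of $H$ is generated by the binomials $x^{\vect{a}}-1$ with $\vect{a}\in\Lambda$, by linear independence of characters.
\item Elementary divisors then give $\dim H=n-\rk\Lambda$, and decompose $\ker\varphi_A$ into torsion translates of $\varphi_U^{-1}(\{1\}\times\Gm^{n-d})$.
\item Item (3) follows by translating.
\end{itemize}
Your ``main obstacle'' paragraph is just Dedekind's lemma re-proved inline, and it can be skipped. The restrictions $x^{\vect{a}}|_H$ are homomorphisms $H(\Qbar)\to\Qbar^{*}$, so Dedekind's lemma applied directly shows that the coefficient sums of $f$ over each class of $\Z^n/\Lambda$ vanish. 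That is exactly membership in the binomial ideal.
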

This proposition motivates the following definitions.
\begin{defn}
We say that $H \subseteq \Gm^n$ is a \emph{special subvariety} or \emph{torsion coset} if it is an irreducible component of an algebraic subgroup of $\Gm^n$;
we say that $W \subseteq \Gm^n$ is a \emph{weakly special subvariety} if it is an irreducible component of an algebraic coset.
Note that weakly special subvarieties are exactly the irreducible algebraic cosets.
\end{defn}

Suppose that $H$ is a special subvariety of $\Gm^n$ of codimension $d$. 
By the previous proposition, $H$ is an irreducible component of a subgroup $\tilde{H}$, which in turn can be written as $\ker \varphi_A$ for some matrix $A \in \Mat_{d \times n}(\Z)$ of maximal rank.
By performing a base change in both the starting and the arrival spaces, we can turn $A$ into a matrix of the form $(0 \mid B)$, where $B$ is a $d\times d$ diagonal matrix.
In this basis, $\tilde{H}$ is then cut by the equations ${x_{n-d+1}}^{b_{n-d+1}} = 1; {x_{n-d+2}}^{b_{n-d+2}} = 1; \dots; {x_n}^{b_n}=1$.
So, $H$ is described by
\[
\begin{cases}
x_{n-d+1} &= \omega_1\\
&\vdots\\
x_{n} &= \omega_d
\end{cases}
\]
where $\omega_i$ is a $b_{n-d+i}$-th root of unity, for $i=1, \dots, d$.

When $d = 1$, i.e. in the case of hypersurfaces, $A \in \Mat_{1\times n}(\Z)$ is just a row of integers; 
we will denote them by $\vect{a} = (a_1, \dots, a_n) \in \Z^n$.
We will adopt for the rest of this paper the following notation:
\begin{equation*}
\Hvar{a}[c] := \varphi_{\vect{a}} ^{-1} (\{c\}),
\end{equation*}
whenever $\vect{a}$ is a vector of integers and $c \in \Gm$.
If $\vect{a}$ is primitive, that is, the gcd of its elements is $1$, then $\Hvar{a}[c]$ is irreducible, and therefore it is a weakly special subvariety.


\subsection{Elliptic case}
Let $E$ be an elliptic curve defined over $\Qbar$ and let $n$ be a fixed positive integer. 
We want to study the algebraic subgroups and cosets of $E^n$.
This section is also fairly classical, and most of the results can be found, for example, in \cite[Section 3.3]{Masser1983} and \cite{Viada2003}.

Recall that $\End(E)$ is either $\Z$ or (isomorphic to) an order in a quadratic imaginary field.
Similarly to the $\Gm$ case, we have a correspondence between group homomorphisms $\varphi:E^n \to E^d$ and matrices in $\Mat_{d \times n}(\End(E))$, compatible with compositions, which we will also denote by $\varphi$.

\begin{prop}\
\label{prop:elliptic_subgroups_structure}
\begin{enumerate}
\item All algebraic subgroups of $E^n$ of codimension $d$ are \emph{contained} in subgroups of the same codimension of the form $\ker(\varphi_A)$ for some $A \in \Mat_{d \times n}(\End(E))$ of maximal rank.
\item Let $H \subseteq E^n$ be an irreducible component of an algebraic subgroup. 
Then, $H=\omega + A$, where $A$ is an irreducible abelian subvariety and $\omega$ is a torsion point.
\item Let $W \subseteq E^n$ be an irreducible component of an algebraic coset, i.e. a translate of an algebraic subgroup. 
Then, $W = c + A$,  where $A$ is an irreducible abelian subvariety and $c$ is any point in $E^n$.
\end{enumerate}
\end{prop}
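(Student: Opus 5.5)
The plan is to deduce all three statements from two standard facts. First, a connected algebraic subgroup of an abelian variety is an abelian subvariety. Second, Poincaré reducibility (\cite[Corollary A.5.1.8]{Hindry2000}) applies, together with the identification $\mathrm{Hom}(E^n,E^d)=\Mat_{d\times n}(\End(E))$, which comes from the universal property of products.

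\textbf{Part (2).} Let $H$ be an irreducible component of an algebraic subgroup $G'\subseteq E^n$.
\begin{enumerate}
\item The identity component $A:=G'^0$ is a connected, complete algebraic group, so it is an abelian subvariety.
\item $A$ has finite index in $G'$, and the irreducible components of $G'$ are exactly its cosets. Hence $H=x+A$ for some $x\in G'$.
\item Since $G'/A$ is finite, there is $N\ge1$ with $Nx\in A$.
\item Multiplication by $N$ is surjective on the abelian variety $A$, so we can pick $a\in A$ with $Na=Nx$.
\item Then $\omega:=x-a$ is an $N$-torsion point and $H=\omega+A$.
\end{enumerate}

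\textbf{Part (3).} A component $W$ of a coset $c+G'$ is of the form $c+H$, with $H$ a component of $G'$. By part (2), $W=c+\omega+A$, and we rename $c+\omega$ as $c$.

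\textbf{Part (1).} Let $H$ have codimension $d$, and set $B=H^0$, an abelian subvariety of dimension $n-d$.
\begin{enumerate}
\item By Poincaré reducibility, the quotient $E^n/B$ is isogenous to a complementary abelian subvariety of $E^n$.
\item Every simple factor of that complement is isogenous to $E$, so $E^n/B$ is isogenous to $E^d$. Choose an isogeny $\psi:E^n/B\to E^d$.
\item Set $\varphi=\psi\circ q$, where $q:E^n\to E^n/B$ is the quotient map. This is a surjective homomorphism $E^n\to E^d$ whose kernel has dimension $n-d$ and contains $B$.
\item The image $\varphi(H)$ is a finite group, since $B$ has finite index in $H$. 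Hence it is killed by some $N\ge1$, and $H\subseteq\ker([N]\circ\varphi)$.
\item The homomorphism $[N]\circ\varphi$ is still surjective with kernel of dimension $n-d$. So this kernel has the same codimension $d$ as $H$.
\item Writing $[N]\circ\varphi=\varphi_A$ with $A\in\Mat_{d\times n}(\End(E))$, surjectivity forces $A$ to have rank $d$ over $\End(E)\otimes\Q$. Otherwise some nonzero combination of the rows would vanish, and the image would lie in a proper abelian subvariety of $E^d$.
\end{enumerate}

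The only step that needs any care is the isogeny $E^n/B\sim E^d$. It is a direct consequence of Poincaré reducibility and the uniqueness of the decomposition into simple factors up to isogeny. Everything else is formal.
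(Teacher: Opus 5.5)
Your proof is correct and complete. Parts (2) and (3) follow formally from the identity component being an abelian subvariety, and for part (1) the homomorphism $[N]\circ\psi\circ q$ together with the rank argument over $\End(E)\otimes\Q$ works as you describe. The paper gives no proof of its own and cites Masser and Viada instead, but the discussion right after the statement uses your key step: Poincaré reducibility plus uniqueness of the isogeny decomposition, giving a complement isogenous to $E^{d}$. So your route is essentially the one the paper has in mind.
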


\noindent Similarly to the $\Gm^n$ case, we introduce the following definitions.
\begin{defn}
We say that $H \subseteq E^n$ is a \emph{special subvariety} or \emph{torsion coset} if it is an irreducible component of algebraic subgroup of $E^n$;
we say that $W \subseteq E^n$ is a \emph{weakly special subvariety} if it is an irreducible component of an algebraic coset.
Note again that weakly special subvarieties are exactly the irreducible algebraic cosets.
\end{defn}

Recall that any abelian subvariety $A < E^n$ is contained in a codimension one algebraic subgroup defined by $a_1 P_1 + \cdots + a_n P_n = O$, where the $P_i$'s denote the projections on the $i$-th factor, $O$ is the zero element of $E$, and the $a_i$'s are elements of the endomorphism ring of $E$.
We will adopt the following notation: we write $\Hvar{a}[c]$ for the subvariety of $E^n$ cut by
\begin{equation*}
a_1 P_1 + \cdots + a_n P_n = c,
\end{equation*}
for $\vect{a} = (a_1, \ldots, a_n)\in (\End(E))^n$ and $c \in E$. 
In particular, every coset of $E^n$ will be contained in a coset of this shape.
Unlike the $\Gm$ case, we can only guarantee containment, as some irreducible algebraic subgroups of $E^n$ may not be expressible in the form $\ker(\varphi_A)$ with $A$ of maximal rank in those cases where $\End(E)$ is not a PID.
Notice that, as a consequence of Proposition \ref{prop:elliptic_subgroups_structure}, it is always true that every irreducible algebraic subgroup is an irreducible component of a subgroup of the form $\ker(\varphi_A)$ with $A \in \Mat_{d \times n}(\End(E))$ of maximal rank.
In particular, they will be the irreducible component containing the identity.
For example, let $E_5$ be a CM elliptic curve with $\End(E_5)=\Z[\sqrt{-5}]$, and consider in $E_5^2$ the subgroup $H$ defined by
\[
\left\{
\begin{aligned}
2 P_1 + (1 + \sqrt{-5}) P_2 &= 0\\
(1 - \sqrt{-5}) P_1 + 3 P_2 &= 0
\end{aligned}
\right.
.
\]
Then, $H$ is an irreducible subgroup of codimension one that cannot be described by a single equation. 
Moreover, it is isogenous but not isomorphic to $E_5$.\\

We notice that, for any elliptic curve $E$, if we identify the elements of $\End(E)$ with the elements of $\Z$ or with the elements of the appropriate order in a quadratic field in the CM case, for every $n$-tuple $\vect{p}=(p_1, \dots, p_n) \in (\End(E))^n$ the abelian subvariety given by the connected component of the identity of $\Hvar{p}=\Hvar{p}[O]$ is precisely the image via the exponential map of the linear subspace cut by the equation $p_1 z_1 + \cdots + p_n z_n = 0$ in the tangent space of $E^n$ at the origin.\\

Let now $A<E^n$ be an (irreducible) abelian subvariety of any dimension. 
By Poincaré Reducibility Theorem (see for example \cite[Corollary A.5.1.8]{Hindry2000}), there exist an abelian variety $A'$ and an isogeny $\varphi:A' \times A \rightarrow E^n$, both defined over $\Qbar$.
By unique factorization, we have that $A'$ is isogenous to $E^d$ and $A$ is isogenous to $E^{(n-d)}$, where $d = \codim A$.
Hence, if $A < E^n$ is an irreducible abelian subvariety, up to an isogeny of $E^n$, we can assume that it is the kernel of the first $d$ coordinates.

Similarly, if $H\subseteq E^n$ is a special subvariety, we can assume, up to isogeny, that
\[
H=\underbrace{E \times \cdots \times E}_{n-d} \times \{\omega_1\}\times\cdots \times \{\omega_{d}\},
\]
with $\omega_1, \dots, \omega_{d}$ torsion elements of $E$ and $d = \codim H$,
and, if $W\subseteq E^n$ is a weakly special subvariety, we can assume, again up to isogeny, that
\[
W=\underbrace{E \times \cdots \times E}_{n-d} \times \{c_1\}\times\cdots \times \{c_{d}\},
\]
with $c_1, \dots, c_d$ points in $E^d(\Qbar)$.

Note that this is not in contrast with the existence of subgroups that are not cut by system of equations of maximal rank: indeed, to write subgroups in that form we need first to perform an isogeny \emph{which depends on the subgroup}.

\section{The Gauss Map}
\label{sec:gauss}
In this section, we will introduce a technical tool, the Gauss map, that will be crucial in later proofs.
Although we will only need it for curves in $\Gm^n$ or in powers of elliptic curves, we introduce it in a much more general context, that of Lie groups.

The term ``Gauss map'' is used in general to denote a map associating to every point of a subvariety $V$ of an ambient space $X$ the tangent space $T_P V$.
It was first considered by Gauss for orientable surfaces in $\R^3$, mapping to the points of the unit sphere corresponding to the normal directions.
He used such a map to study the curvature of surfaces, and ultimately prove the Theorema Egregium in 1827.

Let $\mcal{L}$ be a Lie group. The group structure allows us to identify the tangent spaces at each point using the differential of the translation maps.
Hence, we get a trivialization of the tangent bundle $T\mcal{L} \cong \mcal{L} \times T_e \mcal{L}$, where $T_e \mcal{L}$ is the tangent space at the identity.
Suppose that $V \subseteq \mcal{L}$ is a subvariety, and let $V^{sm}$ be the open dense locus of smooth points.
Then, for each $P \in V^{sm}$ the tangent space $T_PV$ is a well-defined subspace of $T_P \mcal{L}$ of dimension equal to the dimension of $V$. 
Using the above-mentioned trivialization of the tangent bundle of $\mcal{L}$, we can also identify $T_P V$ with a subspace of $T_e \mcal{L}$.
Hence we get a map
\[
\gamma_{\mcal{L}, V} : V^{sm} \to \Gr(\dim V, T_e \mcal{L}),
\]
where $\Gr(\dim V, T_e \mcal{L})$ is the Grassmanian of $(\dim V)$-dimensional vector subspaces of $T_e \mcal{L}$.
We will often suppress the dependence on the Lie group $\mcal{L}$, as such datum will be clear from the context.
This map has been studied previously, see for example \cite[Section 4]{Griffiths1979}, and is a key ingredient in Raynaud's proof of the Manin-Mumford Conjecture \cite{Raynaud83}.

In the case when $V=\Ccal$ is a curve, the Grassmanian $\Gr(\dim V, T_e \mcal{L})$ is just the projective space $\P^{\dim \mcal{L}-1}$.
\begin{remark}
By definition, we have $\gamma_\Ccal (P) = \gamma_{\tau^{-1}_P(\Ccal)} (e) = [T_e (\tau^{-1}_P(\Ccal))]$, where $\tau_P$ is the left translation by $P$.
\end{remark}
Whenever, if $V= \Ccal$ is a curve and $\mcal{L}$ is the analytification of an abelian variety or of $\Gm^n$, then the Gauss map defined above is actually an algebraic map.
For abelian varieties, this was proven by Raynaud in \cite{Raynaud83}; for $\Gm^n$ we check it by explicitly describing the map.
\medskip

Let $\mcal{L}=(\Gm^n)^{an}$. The Gauss map can be written more explicitly as follows. 
Let $x_1 ,\dots, x_n$ denote the coordinates of $\Gm^n$; we also see these as coordinate functions on $\Ccal$ and we have the corresponding differentials $\diff{x_1},\ldots,\diff{x_n}$.
Endowing $T_1\Gm^n$ with the basis $(\frac{\partial}{\partial x_1}, \dots, \frac{\partial}{\partial x_n})$, we then have the map $\gamma_\Ccal : \Ccal^{sm} \to  \Gr(1, T_1 \Gm^n)= \P^{n-1}$ defined by
\[
\gamma_\Ccal: P=(x_1, \dots, x_n) \mapsto \left[ \frac{\diff{x_1} (t_P)}{x_1}: \dots : \frac{\diff{x_n}(t_P)}{x_n} \right],
\]
where $t_P$ is any non-zero element in $T_P \Ccal$. Since all non-zero elements of $T_P \Ccal$ for $P$ a smooth point of $\Ccal$ are proportional, this is well defined.
We notice that this map is just the logarithmic derivative, and therefore we expect multiplicative relations to translate into linear relations.
We will prove such a statement, again in a more general setting.

\begin{prop}
Let $\mcal{L}$ be an abelian Lie group, $\mathfrak{l} = T_e \mcal{L}$ the tangent space at the identity, and $\exp: \mathfrak{l} \to \mcal{L}$ the associated exponential map. 
Let $V$ be a linear subspace of $\mathfrak{l}$, let $\Ccal \subseteq \mcal{L}$ be a curve, and let $H=\exp(V)$.
Then, for every $P \in \Ccal^{sm}$ we have that $\Ccal$ is tangent to $P + H$ at $P$ if and only if $\gamma_\Ccal(P) \in V$.
\end{prop}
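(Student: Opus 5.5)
The plan is to reduce both tangent spaces to subspaces of $\mathfrak{l}$ via the left-translation trivialization. Tangency then becomes a comparison of two subspaces of $\mathfrak{l}$: the line $\gamma_\Ccal(P)$ and the image of $V$.

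First, identify the tangent space of $H=\exp(V)$ at the identity. Since $\mcal{L}$ is abelian, $\exp$ is a group homomorphism from $(\mathfrak{l},+)$ to $\mcal{L}$, and its differential at $0$ is the identity of $\mathfrak{l}$. Hence $H$ is an (immersed) subgroup and $T_e H = d\exp_0(V)=V$. In fact, for every $v\in V$ the curve $t\mapsto \exp(tv)$ lies in $H$ and has velocity $v$ at $t=0$, so $V\subseteq T_eH$. Equality of dimensions (or the local diffeomorphism property of $\exp$ near $0$) gives $T_eH=V$.

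Second, translate to $P$. Let $\tau_P$ be the translation by $P$. Then $P+H=\tau_P(H)$, so
\[
T_P(P+H)=d(\tau_P)_e(T_eH)=d(\tau_P)_e(V).
\]
By the Remark, $\gamma_\Ccal(P)$ is the line $d(\tau_P)_e^{-1}(T_P\Ccal)=T_e(\tau_P^{-1}(\Ccal))\subseteq \mathfrak{l}$. Since $d(\tau_P)_e$ is a linear isomorphism $\mathfrak{l}\to T_P\mcal{L}$, the inclusion $T_P\Ccal\subseteq T_P(P+H)$ holds if and only if
\[
d(\tau_P)_e^{-1}(T_P\Ccal)\subseteq V,
\]
that is, if and only if the line $\gamma_\Ccal(P)$ is contained in $V$. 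Interpreting ``$\gamma_\Ccal(P)\in V$'' as ``the point of $\P(\mathfrak{l})$ lies in $\P(V)$'', this is exactly the claim.

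The only delicate point is the first step: if $H$ is not closed, one must make sense of its tangent space as an immersed submanifold, which the homomorphism property of $\exp$ handles. In the applications $V$ is such that $\exp(V)$ is an algebraic subgroup, so this causes no real difficulty. Everything else is linear algebra.
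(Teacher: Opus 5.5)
Your proposal is correct and follows essentially the same route as the paper: it translates by $\tau_P^{-1}$, uses that $d\tau_P$ is a linear isomorphism to reduce tangency to the inclusion $T_e(\tau_P^{-1}(\Ccal)) \subseteq T_e H = V$, and then invokes the Remark identifying that line with $\gamma_\Ccal(P)$. Your extra argument that $T_eH = V$, using that $\exp$ is a homomorphism with identity differential and that $H$ may only be an immersed subgroup, fills in a step the paper takes for granted.
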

\begin{proof}
By definition, $\Ccal$ is tangent to $P + H$ in $P$ if and only if $T_P \Ccal \subseteq T_P (P + H)$, when seen as subspaces of $T_P \mcal{L}$.
Since $\diff{\tau_P}$ is an isomorphism, this is the same as $T_e( \tau^{-1}_P( \Ccal)) \subseteq T_e( \tau^{-1}_P(P + H)) = T_e H = V$.
The claim follows from the fact that $[T_e( \tau^{-1}_P( \Ccal))] = \gamma_P (\Ccal)$.

\end{proof}

In the special case in which $\mcal{L}=\mcal{G}$ is a power of $\Gm^n$ or of an elliptic curve, we get the following.
\begin{corollary}
\label{coroll:tangentplane}
Let $\Ccal$ be a curve in $\Gcal=\Gm^n$ or $\Gcal=E^n$ for some elliptic curve $E$, all defined over a number field $K$, 
and let $W=\Hvar{p}[c]$ be a one-codimensional coset.
Let $P$ be a point of $\Ccal \cap W$ and assume that $\Ccal$ is tangent to $W$ at $P$. 
Then, $\gamma_\Ccal (P)$ is contained in the hyperplane $L \subseteq \P^{n-1}= \P T_e \Gcal$ given by the equation:

\[
p_1 z_1+\cdots + p_n z_n = 0,
\]
where $z_i=\frac{\partial}{\partial x_i}$ are the (usual) coordinates of $\P T_e \Gcal$.
\end{corollary}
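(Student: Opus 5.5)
The plan is to deduce the corollary directly from the preceding Proposition, applied to the Lie group $\mcal{L}=\Gcal^{an}$, so the work consists of identifying the coset $W=\Hvar{p}[c]$ near $P$ with $P+\exp(V)$ for the linear subspace $V=\{p_1z_1+\cdots+p_nz_n=0\}\subseteq T_e\Gcal$.

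\textbf{Step 1: identify the tangent space of $W$.} Consider the homomorphism $\varphi_{\vect{p}}:\Gcal\to G$, given by $x\mapsto x_1^{p_1}\cdots x_n^{p_n}$ in the toric case and by $(P_1,\dots,P_n)\mapsto p_1P_1+\cdots+p_nP_n$ in the elliptic case. Its differential at the identity, in the coordinates $z_i$, is the linear form $z\mapsto p_1z_1+\cdots+p_nz_n$. In the toric case this comes from $\log(x_1^{p_1}\cdots x_n^{p_n})=\sum p_i\log x_i$; in the elliptic case an endomorphism acts on $T_eE$ as multiplication by the corresponding complex number, which is the identification recalled in Section~\ref{sec:structure}. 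Since $\vect{p}\neq 0$, this form is nonzero, so $\varphi_{\vect{p}}$ is a submersion. Hence $W=\varphi_{\vect{p}}^{-1}(c)$ is a smooth hypersurface.

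\textbf{Step 2: reduce to the Proposition.} Translation invariance gives $T_QW=d\tau_Q(\ker d\varphi_{\vect{p},e})$ for every $Q\in W$. Equivalently, the connected component of $W$ through $P$ equals $P+H$ with $H=\exp(V)$: this is the connected component of the identity of $\Hvar{p}$, again as recalled in Section~\ref{sec:structure}. Tangency of $\Ccal$ and $W$ at $P$ is local and depends only on $T_PW=T_P(P+H)$, so the Proposition applies. It gives that $\gamma_\Ccal(P)$, viewed as a line in $T_e\Gcal$, lies in $V$, which is exactly the statement that its projective point lies on the hyperplane $L$.

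\textbf{Expected obstacle.} There is little real difficulty. The only point needing care is the elliptic CM case. There one must check that the coordinates $z_i$ are chosen compatibly with the embedding of $\End(E)$ into $\C$ via its action on $T_eE$, so that the equation has exactly the coefficients $p_i$ rather than their complex conjugates. A related issue is the choice of coordinates: if $P$ is a singular point of $\Ccal$, the Gauss map is undefined there, so we implicitly take $P\in\Ccal^{sm}$, as in the Proposition.
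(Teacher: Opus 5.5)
Your proposal is correct and follows the paper's route. The paper gives no separate proof: the corollary is the preceding Proposition applied with $V=\{p_1z_1+\cdots+p_nz_n=0\}$ and $H=\exp(V)$, the identity component of $\Hvar{p}$ as recalled in Section~\ref{sec:structure}, so the component of $W$ through $P$ is $P+H$. Your extra points (computing $T_PW$ via the differential of $\varphi_{\vect{p}}$, the normalized identification of $\End(E)$ acting on $T_eE$ in the CM case, and taking $P\in\Ccal^{sm}$) are exactly the conventions the paper uses implicitly.
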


\section{Proof for \texorpdfstring{$\Ccal$}{C} not contained in a proper coset}
\label{sec:no_coset}
We recall the notations from Theorem \ref{thm:main}. 
Let $G$ be $\Gm$ or $E$ for some elliptic curve $E$ defined over $\Qbar$,
let $n$ be a positive integer, $\Gcal = G^n$ and let $\Ccal \subseteq \Gcal$ be an irreducible curve defined over $\Qbar$ that is not contained in any proper algebraic subgroup.
Let us fix a number field $K$ over which both $\Gcal$ and $\Ccal$ are defined.

Let us further assume in this section that $\Ccal$ is not contained in any proper algebraic coset of $\Gcal$.
Under this assumption, we will show a bound for the degree of the field of definition of a point $P \in \Ccal \cap W$, where $W$ is a one-codimensional weakly special subvariety of $\Gcal$ and $\Ccal$ is tangent to $P$ at $W$;
this bound will be independent of $W$. 
The statement of the theorem will follow combining this degree bound with height bounds for points in a curve not contained in a weakly special subaviety lying in a proper algebraic subgroup proven by Bombieri, Masser and Zannier \cite{BMZ99} in the case of tori and by Viada \cite{Viada2003} in the ellptic curve case.

We remark that this height bound does not hold anymore if the curve is contained in a weakly special subvariety, as shown in \cite{BMZ99} and \cite{Viada2003}; therefore, this 
last hypothesis is crucial to perform the argument.

\begin{prop}\label{prop:degree}
Let $G=\Gm$ or $E$ for some elliptic curve $E$ defined over a number field $K$.
Moreover, if $E$ has complex multiplication, after identifying $\End(E)$ with an appropriate order in a quadratic field, suppose also that $\End(E) \subseteq K$.
Let $n$ be a positive integer, let $\Gcal = G^n$,
let $\Ccal \subseteq \Gcal$ be a curve, also defined over $K$, and assume that is not contained in any proper algebraic coset and that the associated Gauss map $\gamma_{\mathcal{C}}$ is also defined over $K$.  
There is a constant $\delta_1 > 0$, depending only on $\Ccal$ (and $\Gcal$), such that,
for every one-codimensional coset $W$ defined over $\overline{K}$ and every point $P \in \Ccal \cap W$ with $\Ccal$ tangent to
$W$ at $P$:
\[
[K(P ) : K] \leq \delta_1.
\]
\end{prop}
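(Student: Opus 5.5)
The plan is to push the tangency condition through the Gauss map. There it becomes an intersection of the fixed curve $\gamma_\Ccal(\Ccal)$ with a hyperplane defined over $K$. Bezout's Theorem then bounds the number, hence the degree, of the image points, and the finiteness of the fibres of $\gamma_\Ccal$ transfers this bound back to $P$.

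First, we discard the finitely many singular points of $\Ccal$. They form a fixed finite set defined over $K$, so their degrees over $K$ are bounded by a constant depending only on $\Ccal$, which we absorb into $\delta_1$. For $P\in \Ccal^{sm}$, write $W=\Hvar{p}[c]$ with $\vect{p}\in \End(G)^n\setminus\{0\}$, after enlarging $W$ to a coset of this shape if necessary, as explained in Section \ref{sec:structure}. By Corollary \ref{coroll:tangentplane}, $\gamma_\Ccal(P)$ lies on the hyperplane $L_{\vect{p}}:\ p_1z_1+\dots+p_nz_n=0$ of $\P^{n-1}$. Since $\End(G)\subseteq K$ by hypothesis, $L_{\vect{p}}$ is defined over $K$, whatever the coset $W$ is. Let $D$ be the Zariski closure of $\gamma_\Ccal(\Ccal^{sm})$. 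It is an irreducible subvariety of $\P^{n-1}$ of dimension at most $1$, defined over $K$ because $\gamma_\Ccal$ is.

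The key step, and the main obstacle I expect, is to show that $D\not\subseteq L_{\vect{p}}$ for every nonzero $\vect{p}$. This is where the hypothesis that $\Ccal$ lies in no proper coset enters. Suppose $D\subseteq L_{\vect{p}}$. By the Proposition preceding Corollary \ref{coroll:tangentplane}, applied with $V$ the kernel of $p_1z_1+\dots+p_nz_n$ in $T_e\Gcal$, the curve $\Ccal$ is then tangent at every smooth point $Q$ to the coset $Q+H$, where $H$ is the connected component of the identity of $\Hvar{p}$. Equivalently, the morphism $\varphi_{\vect{p}}:\Gcal\to G$ restricted to $\Ccal$ has zero differential at every smooth point. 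Hence $\varphi_{\vect{p}}$ is constant on $\Ccal$, which is irreducible and we are in characteristic $0$. So $\Ccal\subseteq \Hvar{p}[c']$ for some $c'$, a proper coset, which is a contradiction. In the same way, $D$ cannot be a point: a point lies on some hyperplane $L_{\vect{p}}$ with $\vect{p}\neq 0$, since $n\ge 2$. Thus $D$ is an irreducible curve of some degree $\deg D$, and $D\cap L_{\vect{p}}$ is finite. By Bezout's Theorem it has at most $\deg D$ points, a bound independent of $\vect{p}$.

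Finally, $D\cap L_{\vect{p}}$ is a $\mathrm{Gal}(\overline K/K)$-stable set, because both $D$ and $L_{\vect{p}}$ are defined over $K$. So the point $Q=\gamma_\Ccal(P)$ has at most $\deg D$ conjugates, that is, $[K(Q):K]\le \deg D$. Next, extend $\gamma_\Ccal$ to a finite morphism from the normalization $\widetilde{\Ccal}$ of the projective closure of $\Ccal$ onto the projective closure of $D$; this morphism is non-constant by the previous paragraph. The fibre over $Q$ then has at most $\deg\gamma_\Ccal$ points, and it is stable under $\mathrm{Gal}(\overline K/K(Q))$. Therefore $[K(P):K(Q)]\le \deg \gamma_\Ccal$, and we may take
\[
\delta_1=\max\bigl\{\deg D\cdot\deg\gamma_\Ccal,\ \max_{P\in \Ccal\setminus\Ccal^{sm}}[K(P):K]\bigr\},
\]
which depends only on $\Ccal$ and $\Gcal$.
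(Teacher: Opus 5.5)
Your route is the paper's own proof. You push tangency through the Gauss map, note that $L_{\vect{p}}$ is defined over $K$ because $\End(G)\subseteq K$, and rule out $D\subseteq L_{\vect{p}}$ because $d\varphi_{\vect{p}}$ would vanish on $\Ccal$. Then Bezout bounds $D\cap L_{\vect{p}}$, and you multiply by $\deg\gamma_\Ccal$. Your handling of the singular points, absorbing their degrees into $\delta_1$, and of the fibre count via the normalization is slightly more careful than the paper's.

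One sub-step, however, is justified incorrectly. To show $D$ is not a point, you say that a point of $\P^{n-1}$ lies on some $L_{\vect{p}}$ with $\vect{p}\neq 0$ since $n\ge 2$. Your key step only applies to $\vect{p}\in\End(G)^n$, since you need $\varphi_{\vect{p}}$ to be a morphism of $\Gcal$. For such $\vect{p}$ the claim is false. For instance, with $G=\Gm$ and $n=2$, the point $[1:\sqrt{2}]$ lies on no line $p_1z_1+p_2z_2=0$ with $(p_1,p_2)\in\Z^2\setminus\{0\}$.

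The fix is the one the paper uses in its case (1). The point $Q=\gamma_\Ccal(P)$ lies on $L_{\vect{p}}$ for the specific $\vect{p}$ attached to $W$. So if $D$ were a point, then $D=\{Q\}\subseteq L_{\vect{p}}$, contradicting your key step. Hence $D$ is a curve as soon as a single tangency point exists, and otherwise the statement is vacuous.

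Alternatively, a constant Gauss map forces $\Ccal$ to be a translate of a one-dimensional algebraic subgroup. That is a proper coset because $n\ge 2$, contradicting the hypothesis.

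With either repair, your proof is complete.
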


\begin{proof}
Up to removing a finite number of points, we may assume $\Ccal$ to be a smooth curve, so that $\gamma_\Ccal$ is defined everywhere.
Up to enlarging $W$, we may assume that $W=\Hvar{p}[c]$ for some $\vect{p} \in \End(G)^n$ and $c \in G(\overline{K})$.
We will denote by $L$ the hyperplane cut by $p_1 z_1+\cdots + p_n z_n = 0$ in $\P T_0 \Gcal = \P^{n-1}$.

Notice that the image $\gamma_\Ccal (\Ccal)$ cannot be contained in $L$.
Indeed, consider the map $\varphi_{\vect{p}}:\Gcal \to G$ given by
\[
\varphi_{\vect{p}}: (x_1, \dots, x_n) \mapsto x_1^{p_1} \cdots x_n^{p_n}
\]
in the toric case
(and, respectively, $p_1 x_1 + \dots + p_n x_n$ in the elliptic case) and its differential $\diff{\varphi}$.
If by contradiction $\gamma_\Ccal (\Ccal) \subseteq L$, we have that $\diff{\varphi}$ is constantly zero on $\Ccal$, and this means that $\restr{\varphi}{\Ccal}$ is constant. 
Hence $\Ccal$ is contained in the 
algebraic coset $\Hvar{p}[c']$, for some $c' \in G$, which is in contradiction with the assumption that $\Ccal$ is not contained into any proper coset.
\medskip

Now, let $P \in \Ccal \cap W$ be a point such that $\Ccal$ is tangent to $W$ at $P$. 
Then $\gamma_\Ccal(P)$ is in $L \cap \gamma_\Ccal (\Ccal)$ and the latter is a finite set, as $\gamma_\Ccal (\Ccal)$ is not contained in $L$.
We have two possibilities:

\begin{enumerate}

\item $\gamma_\Ccal (\Ccal)$ is a point: this means that $\gamma_\Ccal (\Ccal)$ is exactly $\{\gamma_\Ccal (P)\}$ and therefore $\gamma_\Ccal (\Ccal)$ is contained in $L$;
as already remarked, this cannot happen under our assumptions.

\item $\gamma_\Ccal (\Ccal)$ is a curve: observe that in this case there is an upper bound on the cardinality $\abs{L \cap \gamma_\Ccal (\Ccal)}$ given by Bezout's Theorem;
such an upper bound does not depend on $L$, and is exactly the degree $d$ of $\gamma_\Ccal (\Ccal)$ with respect to the line bundle $O(1)$ (and the following holds: any hyperplane intersects $\gamma_\Ccal (\Ccal)$ in at most $d$ points and a generic hyperplane intersects $\gamma_\Ccal (\Ccal)$ in exactly $d$ points).

As $L$ is defined over $K$ (since by assumption $\End(G) \subseteq K$) and $\gamma_\Ccal (\Ccal)$ is defined over $K$, this means that:
\[
[K(\gamma_\Ccal (P)):K] \le \abs{L \cap \gamma_\Ccal (\Ccal)} \le d.
\]
It follows that $[K(P):K] \le d \cdot \deg{\gamma_\Ccal}$, where $\deg{\gamma_\Ccal}$ is the degree of the morphism of curves given by $\gamma_\Ccal: \Ccal \rightarrow \gamma_\Ccal (\Ccal)$.
\end{enumerate}
\end{proof}

\begin{remark}
    In the previous proposition, we actually prove something stronger: namely that there exists a constant $\tilde{\delta}$, depending only on the curve $\mathcal{C}$, such that for a fixed algebraic subgroup $H$ of codimension 1, there are at most $\tilde{\delta}$ points of singular intersection between the curve and the translates of $H$.
    In particular, this implies that the cosets $W$ that have a point of singular intersection with the curve $\mathcal{C}$ are all defined over a finite extension of $K$.
\end{remark}

By Northcott's Theorem, to prove Theorem \ref{thm:main} for $\Ccal$ not contained in any proper weakly special subvariety we only need to have a height bound for the points $P \in \Ccal$ where $\Ccal$ intersects tangentially a proper subgroup $H$.
Such bounds are already known by previous work of Bombieri, Masser and Zannier \cite{BMZ99} for $G=\Gm$ and Viada \cite{Viada2003} for $G=E$ an elliptic curve, who proved that the set
of points $P \in \Ccal$ such that there exists a one-codimensional special subvariety $H$ with $P \in \Ccal \cap H$ is a set of bounded height.

We finally remark that, differently from the usual proofs in unlikely intersections 
{(e.g. \cite{BMZ99, Viada2003, PZ08, HabeggerPila})}, the bound on the degree given by Proposition \ref{prop:degree} is independent of the bound on the height.
This is similar to what Marché and Maurin get in the special case of $\Gcal = \Gm^2$ in \cite{Maurin23}.

\section{Proof in the general case}
\label{sec:general_case}

The proof of the result under the more general assumption that the curve is not contained in a special subvariety is more difficult to handle; indeed, we do not have that the height of the points in our set is automatically bounded applying the results of Bombieri, Masser and Zannier \cite{BMZ99} for $G=\Gm$ and Viada \cite{Viada2003} for $G=E$ an elliptic curve. 
Since we already proved the result in the case when the curve $\Ccal$ is not contained in any weakly special subvariety, we can now assume that this is the case. 
We will now translate our problem (up to isogenies) in an analogous one: namely, we will show that there is a 1-1 correspondence between points of $\Ccal$ which intersect tangentially a special subvariety and points of another curve $\Ccal'$, lying in some $G^d$ with $d\le n$ and not contained in a weakly special subvariety, which intersect tangentially a weakly special subvariety of $G^d$ of a prescribed type (namely the $\vect{\xi}$-special subvarieties mentioned
in the Introduction).
Using this reduction, Theorem \ref{thm:main} will be a consequence of the (more general) Theorem \ref{thm:cspecial}, which is analogous to \cite[Theorem 1.6]{Remond2003},\cite[Conjecture 5.2]{Pink2005} and \cite[Théorème 1.6]{Maurin08} (see also \cite{Bombieri_Masser_Zannier2006}). 

\medskip

We recall our setting from Theorem \ref{thm:main}.
Let $G=\Gm$ or $E$ for some elliptic curve $E$ defined over a number field $K$; let $n$ be a positive integer, $\Gcal = G^n$ and let $\Ccal \subseteq \Gcal$ be a curve defined over $K$ and not contained in a proper special subvariety.
We want to prove that the set $\Ccal^{[1, sing]}$ of points of $\Ccal$ where $\Ccal$ intersects tangentially a proper special subvariety of $\Gcal$ is a finite set.

In the last section, we proved this under the assumption that $\Ccal$ was contained in no weakly special subvariety.
Now, we consider the case in which $\Ccal$ is actually contained in a proper weakly special subvariety $W$ (not special), which we suppose to have minimal dimension.
As shown in Section \ref{sec:structure}, up to an isogeny of $\Gcal$ we can assume that $W = G^{d} \times \{\xi_1\}\times\dots \times\{\xi_k\}$ with $d+k=n$.

Since $\Ccal$ is contained in $W$ and $\Ccal$ is defined over $K$, then also $\xi_1, \dots, \xi_k$ will be defined over $K$; moreover, these points will be (multiplicatively or linearly) independent in $G^k$, since otherwise $\Ccal$ would be contained in a proper special subvariety of $\Gcal$. We put $\vect{\xi} = (\xi_1, \dots, \xi_k)$.

If $\Ccal^{[1, sing]}$ is empty, there is nothing to prove; therefore, we assume that this is not the case and fix $P \in \Ccal^{[1, sing]}$.

We now wish to restrict our attention to the first $d$ components (with elements $\vect{y} = (y_1,\dots,y_{d})$) and consider the following injection:
\[
\begin{matrix}
    \iota: & G^{d} &\rightarrow & G^n\\
    &\vect{y} & \mapsto &(y_1,\dots,y_{d}, \xi_1,\dots,\xi_k) 
\end{matrix}
\]

Notice that $\iota(G^{d})$ is exactly $W$, and since we are assuming that $\mathcal C\subseteq W$, we can consider $\iota^{-1}(\Ccal)$; this is again a curve that we will denote by $\mathcal{C}'$.
\medskip

Using the minimality of $W$, we can show that $\mathcal{C'}$ is not contained in any proper weakly special subvariety of $G^{d}$. Indeed, if we assume by contradiction that $\iota^{-1}(\Ccal) \subseteq W'$ for $W'$ a proper weakly special subvariety of $G^{d}$, the image $\iota(W')$ would be again a weakly special subvariety strictly contained in $W$, contradicting the minimality of the dimension of $W$.
\medskip

We will be showing that $\iota$ induces a one-to-one correspondence between points in $\mathcal C \cap H$ where $H$ is a special subvariety of $G^n$ and points lying in the intersection between $\mathcal C'$ and a  weakly-special subvariety of $G^d$ of a particular shape.
For $\vect{a} \in (\mathrm{End}(G))^h$, we denote by $\varphi_{\vect{a},h}: G^h \rightarrow G$ the map defined by $\varphi_{\vect{a},h}(x_1, \dots, x_h)=x_1^{a_1}\cdots x_h^{a_h}$ if $G=\Gm$ and $\varphi_{\vect{a},h}(x_1, \dots, x_h)=a_1 x_1+ \cdots + a_h x_h$ if $G=E$.\\
Let $P \in \mathcal C \cap H_{\vect{a}}$, where $H_{\vect{a}}$ is the algebraic subgroup of $G^n$ defined by $\varphi_{\vect{a}, n}=O$, and $O$ is the identity element of $G$; since $\mathcal C \subseteq W$, in particular $P=(x_1, \dots, x_d, \xi_1, \dots, \xi_k)$. 
Then, $P\in \mathcal C \cap H_{\vect{a}}$ if and only if 
\begin{equation} \label{eq:xi_shape}
\varphi_{\vect{a}',d}(x_1, \dots, x_d) =\varphi_{-\vect{a}'',k}(\vect{\xi}), 
\end{equation}
with $\vect{a}' = (a_1, \dots, a_{d})$ and $\vect{a}''=(a_{d+1}, \dots, a_n)$. 
This means that the point $\iota^{-1}(P)=(x_1, \dots, x_k)$ lies in the intersection between $\mathcal C'$ and the subvariety of $G^d$ cut by \eqref{eq:xi_shape} (which is nothing else than $\iota^{-1}(H_{\vect{a}})$). 
Notice that this subvariety is the union of weakly-special subvarieties of $G^d$ of some particular form, since $\vect{\xi}$ is fixed.

\medskip
This characterization leads us to the following definition.

\begin{defn}\label{def:cspecial}
Let $k,d$ be two positive natural numbers and let $\vect{\xi}\in G(\C)^k$ be a $k$-tuple of independent nonzero elements $(\xi_1, \dots,\xi_k)$ of $G$. 
A $\vect{\xi}$-\emph{special hypersurface} of $G^{d}$ is an irreducible component of the zero locus of an equation of shape
\[
\varphi_{\vect{a}',d}(\vect{y}) = \varphi_{-\vect{a}'',k}(\vect{\xi}),
\]
with $\vect{a}'$ and $\vect{a}''$ tuples of endomorphisms such that $\vect{a}'$ is not the zero vector.
\end{defn}

Notice that $\vect{\xi}$-special hypersurfaces are weakly special subvarieties of $G^d$ of codimension one, and we showed that $P \in \mathcal{C} \cap H$ with $H$ a proper special subvariety of $G^n$ if and only if $\iota^{-1}(P)$ lies in the intersection of $\mathcal C'$ and a $\vect{\xi}$-\textit{special hypersurface} of $G^{d}$.

Next we will show that, if the first intersection is singular, then also the second one is singular.
This is quite intuitive, as essentially we are just restricting ourselves to the weakly special subvariety $\mathcal{W}$, in which $\mathcal{C}$ is contained.
More specifically, putting all together we have the following result.

\begin{lemma}\label{lemma:ctangent}
Let $\mathcal C \subset G^n$ be an irreducible curve of the form $\mathcal C' \times \{\xi_1\} \times \cdots \times \{\xi_k\}$, where $\mathcal C' \subset G^d$ is not contained in any weakly special subvariety and $\xi_1, \ldots, \xi_k \in G$ are independent. 
Let $H$ be a one-codimensional special subvariety of $G^n$ and let $P \in \Ccal \cap H$ be such that $\Ccal$ is tangent to $H$ at $P$. 
Then, there exists a  $\vect{\xi}$-special hypersurface of $G^d$, which we denote by $U$, such that $\mathcal C'$ is tangent to $U$ at $\iota^{-1}(P)$.
\end{lemma}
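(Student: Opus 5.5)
The plan is to work with the explicit equation of a one-codimensional subgroup containing $H$ and restrict it to the slice $W = G^d \times \{\vect{\xi}\}$. First, as in the proof of Proposition \ref{prop:degree}, I would enlarge $H$ to a subgroup of the form $H_{\vect{a}} = \{\varphi_{\vect{a},n} = O\}$ with $\vect{a} \in \End(G)^n$ nonzero, chosen so that $H$ is an irreducible component of $H_{\vect{a}}$; this is possible by Proposition \ref{prop:elliptic_subgroups_structure} and its toric analogue. Tangency is preserved under this enlargement. Indeed, $H$ is a connected component of the smooth group $H_{\vect{a}}$, so $T_P H = T_P H_{\vect{a}}$. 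Write $\vect{a} = (\vect{a}',\vect{a}'')$. The computation before Definition \ref{def:cspecial} shows that $\iota^{-1}(H_{\vect{a}})$ is the locus $\varphi_{\vect{a}',d}(\vect{y}) = \varphi_{-\vect{a}'',k}(\vect{\xi})$. The candidate $U$ is then the irreducible component of this locus containing $\iota^{-1}(P)$.

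Next I need $\vect{a}' \neq 0$, so that this locus is a genuine $\vect{\xi}$-special hypersurface. If $\vect{a}' = 0$, then $P \in H_{\vect{a}}$ would give $\varphi_{\vect{a}'',k}(\vect{\xi}) = O$ with $\vect{a}'' \neq 0$. This contradicts the independence of $\xi_1,\dots,\xi_k$. Hence $\vect{a}' \neq 0$, the locus has codimension one in $G^d$, and $U$ is a $\vect{\xi}$-special hypersurface that is smooth at $\iota^{-1}(P)$, being a coset.

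For tangency, I would use that $\iota$ is a translate of the inclusion $G^d \hookrightarrow G^d \times \{O\}$, so $d\iota$ identifies $T_{\iota^{-1}(P)} G^d$ with the first $d$ coordinates of $T_P G^n$. Since $\Ccal \subseteq W = \iota(G^d)$, we have $T_P\Ccal = d\iota(T_{\iota^{-1}(P)}\Ccal')$ inside $T_P W$. Moreover $T_P H_{\vect{a}} \cap T_P W = d\iota(T_{\iota^{-1}(P)} U)$. In Gauss-map terms, Corollary \ref{coroll:tangentplane} says that $\gamma_\Ccal(P)$ lies on $\sum_i a_i z_i = 0$. Since $\gamma_\Ccal(P) = (\gamma_{\Ccal'}(\iota^{-1}(P)), 0,\dots,0)$, the last $k$ coordinates vanish, so $\gamma_{\Ccal'}(\iota^{-1}(P))$ satisfies $\sum_{i\le d} a_i z_i = 0$. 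This hyperplane is exactly $\P T_e$ of the connected subgroup underlying $U$, so the Proposition preceding Corollary \ref{coroll:tangentplane} gives that $\Ccal'$ is tangent to $U$ at $\iota^{-1}(P)$. Smoothness of $P$ on $\Ccal$ is equivalent to smoothness of $\iota^{-1}(P)$ on $\Ccal'$, since $\iota$ is an isomorphism onto $W$. The statement implicitly concerns smooth points, as throughout the reduction.

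The main obstacle is the first step in the elliptic CM case, where $H$ itself need not be cut out by a single equation. There one has to ensure that the chosen $H_{\vect{a}}$ has a component with the same tangent space as $H$. I would handle this by taking $\vect{a}$ to be any nonzero row of a maximal-rank matrix $A$ with $H \subseteq \ker \varphi_A$. Then $\ker\varphi_A \subseteq H_{\vect{a}}$ has the same dimension $n-1$, so $H$ is a component of $H_{\vect{a}}$ and $T_PH = T_PH_{\vect{a}}$.
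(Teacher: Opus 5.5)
Your proof is correct and is essentially the paper's argument: you enlarge $H$ to $H_{\vect{a}}$, identify $T_P H_{\vect{a}} \cap T_P W$ with $d\iota$ of the tangent space of the component $U$ of $\iota^{-1}(H_{\vect{a}})$ through $\iota^{-1}(P)$, and pull back along $\iota$. The paper phrases this step as $T_P(W\cap H)=T_PW\cap T_PH$, using that Lie algebras of algebraic subgroups are closed under intersection. Your check that $\vect{a}'\neq 0$ via the independence of $\vect{\xi}$ fills in a detail the paper leaves implicit. The Gauss-map paragraph is not needed, and it restricts you to smooth points: $T_P\Ccal \subseteq T_P H_{\vect{a}} \cap T_P W = d\iota(T_{\iota^{-1}(P)}U)$ together with injectivity of $d\iota$ already gives the claim for Zariski tangent spaces at any point.
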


\begin{proof}
Let us denote by $W= G^d \times  \{\xi_1\}\times \cdots \times \{\xi_k\}$ the coset which contains $\Ccal$ by assumption. 
Let us first prove that $T_P(W \cap H)=T_PW \cap T_PH$. 
Notice that $W$ and $H$ are both translates of algebraic subgroups;
if we denote by $W_0$ and $H_0$ the images of $W$ and $H$ by the translation that maps $P$ to the origin, the statement above amounts to saying that the Lie algebra of (the irreducible component of) $W_0 \cap H_0$ (containing the origin) coincides with the intersection of the Lie algebras of $W_0$ and $H_0$. 
This is true both for the complex Lie group $\Gm^n$ and the complex lie group obtained from powers of elliptic curves: their Lie algebra can be identified with some $\C^n$ and its closed complex subgroups (which are all algebraic) correspond to subspaces of $\C^n$ defined over $\Q$, for $\Gm^n$ and regular elliptic curves, or the CM field, for CM elliptic curves; such subgroups are closed by intersection.

\medskip 
We now have that $T_P \Ccal \subseteq T_P H$ by hypothesis and, since $\Ccal \subseteq W$, then $T_P(\Ccal) \subseteq T_P(W \cap H)$ by the above argument. 
Observe that $\iota^{-1}$ is an isomorphism between $W$ and $G^d$, therefore $T_{\iota^{-1}(P)}(\iota^{-1}(\Ccal))$ is contained in $T_{\iota^{-1}(P)}(\iota^{-1}(H))$. 

Let now $\vect{a} \in \End(G)^n$ be such that $H$ is an irreducible component of the kernel $H_{\vect{a}}$ of $\varphi_{\vect{a}, n}$.
Then, $\iota^{-1}(H)$ is contained in the subvariety of $G^d$ cut out by 
\[
 \varphi_{\vect{a}',d}(\vect{y}) = \varphi_{-\vect{a}'',k}(\vect{\xi}),
\]
where $\vect{a}' = (a_1, \dots, a_{d})$ and $\vect{a}''=(a_{d+1}, \dots, a_n)$, and it is not empty since $P \in W \cap H$. 
Hence, if we take as $U$ the unique irreducible component of $\iota^{-1}(H_{\vect{a}})$ containing $P$, this satisfies our claim.
\end{proof}

Thanks to Lemma \ref{lemma:invariant_isogeny} and Lemma \ref{lemma:ctangent}, Theorem \ref{thm:main} will now be a consequence of the following result.

\begin{thm}\label{thm:cspecial}

Let $k$ and $d$ be positive integers, let $\vect{\xi}=(\xi_1,\ldots,\xi_k)$ be a vector of independent non-zero elements of $G(K)$ with $K$ a number field and let $\Ccal \subseteq G^d$ be a curve defined over $K$ which is not contained in any proper weakly special subvariety. 
Then, there are only finitely many points $P \in \Ccal$ such that there is a $\vect{\xi}$-special hypersurface $U$ such that:

\begin{enumerate}

\item $P$ is in $\Ccal \cap U$;

\item the tangent space $T_P \Ccal$ of $\Ccal$ at $P$ is included in the tangent space $T_P U$ of $U$ at $P$ (namely, the curve $\Ccal$ is tangent to $U$ at $P$).

\end{enumerate}

\end{thm}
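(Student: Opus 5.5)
Since a $\vect{\xi}$-special hypersurface $U$ is in particular a one-codimensional coset of $G^d$, and $\Ccal$ is not contained in any proper weakly special subvariety, Proposition \ref{prop:degree} applies directly. After enlarging $K$ so that $\End(G)\subseteq K$ and $\gamma_\Ccal$ is defined over $K$, it gives a uniform bound $[K(P):K]\le \delta_1$ for every point $P$ in the set we study. By Northcott's theorem, finiteness therefore reduces to a uniform bound on $h(P)$. Here the results of \cite{BMZ99} and \cite{Viada2003} do not apply, since $U$ is not special. I would argue in three steps.

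\textbf{Step 1 (height of $P$ in terms of the coefficients).} Let $P$ be a point of singular intersection with $U$, and choose a representative equation $\varphi_{\vect{a}',d}(\vect{y})=\varphi_{-\vect{a}'',k}(\vect{\xi})$ for $U$. By Corollary \ref{coroll:tangentplane}, $\gamma_\Ccal(P)$ lies on the hyperplane $L_{\vect{a}'}: a'_1z_1+\dots+a'_dz_d=0$. Moreover $\gamma_\Ccal(\Ccal)\not\subseteq L_{\vect{a}'}$, as shown in the proof of Proposition \ref{prop:degree}. So $\gamma_\Ccal(P)$ is one of at most $\deg\gamma_\Ccal(\Ccal)$ points of $\gamma_\Ccal(\Ccal)\cap L_{\vect{a}'}$. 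Standard height inequalities for intersecting a fixed projective curve with a hyperplane (an arithmetic B\'ezout, or simply eliminating variables) give
\[
h(\gamma_\Ccal(P))\le c_1\log(2+\|\vect{a}'\|).
\]
The morphism $\gamma_\Ccal:\Ccal\to\gamma_\Ccal(\Ccal)$ is finite and non-constant. Comparing heights via pullback of an ample bundle (Weil height machine on curves), we get $h(P)\le c_2 h(\gamma_\Ccal(P))+c_3$. Hence
\[
h(P)\le c_4\log(2+\|\vect{a}'\|).
\]
Crucially, this bound does not involve $\vect{a}''$.

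\textbf{Step 2 (small coefficients).} The admissible vectors $\vect{a}=(\vect{a}',\vect{a}'')$ form a lattice. It is the set of relations satisfied by $(P,\vect{\xi})\in G^{d+k}$, i.e.\ a sublattice $\Lambda\subseteq\End(G)^{d+k}$ of relations among $x_1,\dots,x_d,\xi_1,\dots,\xi_k$. By Masser's lattice generator bound \cite{Masser88}, $\Lambda$ has generators of norm at most
\[
c_5\,D^{\kappa}\,(1+h(P))^{\kappa'},
\]
where $D=[K(P):K]\le\delta_1$ and $c_5$ also depends on the fixed $\vect{\xi}$. Since $\vect{\xi}$ is independent, every nonzero element of $\Lambda$ has $\vect{a}'\neq 0$. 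The tangency is a condition on $\vect{a}'$ only. So I must make sure some small element of $\Lambda$ still yields a tangent $\vect{\xi}$-special hypersurface at $P$. The tangency direction satisfies $\sum a'_iz_i=0$ for the given $\vect{a}'$, but not necessarily for the generators. To handle this, I restrict to the sublattice $\Lambda_P=\{\vect{a}\in\Lambda:\ \gamma_\Ccal(P)\in L_{\vect{a}'}\}$. This is the intersection of $\Lambda$ with a linear condition defined over $K(\gamma_\Ccal(P))$, of degree at most $\delta_1$ and height at most $c(1+h(P))$. It is nonzero by hypothesis. A Siegel-lemma-type argument, combined with Masser's bound applied to $\Lambda$, then gives a nonzero $\vect{a}\in\Lambda_P$ with $\|\vect{a}\|\le c_6(1+h(P))^{\kappa''}$.

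\textbf{Step 3 (conclusion).} Combining the two steps,
\[
h(P)\le c_4\log\!\big(2+c_6(1+h(P))^{\kappa''}\big)=O(\log(2+h(P))).
\]
This forces $h(P)$ to be bounded. Together with the degree bound and Northcott's theorem, the set is finite.

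I expect Step 2 to be the main obstacle. One must control a short vector in $\Lambda_P$ rather than merely in $\Lambda$. Alternatively, one can observe that, by Step 1 applied with generators of $\Lambda$, those generators whose $\vect{a}'$-part is not tangent still meet $\Ccal$ at $P$. The finiteness of \cite{BKS} then handles intersections of $\Ccal\times\{\vect{\xi}\}$ with two independent relations, i.e.\ the case where $\Lambda$ has rank at least $2$. In that case the point lies in a codimension-two subgroup of $G^n$, and these points are finitely many. So one may assume $\Lambda$ has rank $1$, where the given $\vect{a}$ is (up to a bounded factor) the generator, and Masser's bound applies directly. I would adopt this rank-one reduction as the primary route.
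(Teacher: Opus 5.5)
Your primary route (the rank-one reduction) is correct and is essentially the paper's proof: a degree bound from Proposition~\ref{prop:degree}, the Gauss-map height bound of Proposition~\ref{prop:tangentheight}, then a split by the rank of the relation module of $(P,\vect{\xi})$, using a Zilber--Pink statement for rank $\ge 2$ and Masser's bound for rank one (you apply \cite{BKS} to $\Ccal\times\{\vect{\xi}\}\subseteq G^{d+k}$, which is legitimate, where the paper cites Maurin, R\'emond--Viada and Galateau). One imprecision: in rank one the given $\vect{a}$ need not be a bounded multiple of a generator, but what you need does hold, namely that Masser's small element is $F$-proportional to $\vect{a}$ and so cuts out the same hyperplane $L_{\vect{a}'}$ and is itself tangent, which is exactly the paper's lemma $F\Gamma_{\mathrm{sing}}\cap\Gamma=\Gamma_{\mathrm{sing}}$.
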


This theorem is analogous to \cite[Conjecture 5.2]{Pink2005}, \cite[Theorem 1.6]{Remond2003} \cite[Théorème 1.6]{Maurin08}.
More generally, one can prove that the general version of the Zilber-Pink conjecture \cite[Conjecture 5.1]{Pink2005} is equivalent to \cite[Conjecture 5.2]{Pink2005} (see \cite[Theorem 5.5]{Pink2005} and \cite[Proposition 4.2]{Remond2003}).

%
%
%

\section{A bound for the height}

To prove Theorem \ref{thm:cspecial}, we will need a bound for the height of the points of singular intersection.
In what follows, we will always assume that our varieties are embedded in a projective space of which they inherit the usual projective logarithmic Weil height. 
For example, we embed $\Gm^n$ into $\P^n$ via:
\[
(x_1,\ldots,x_n) \rightarrow [x_1:\cdots :x_n : 1].
\]

We will now show that, if a point $P$ lies in the intersection of a curve with a one-codimensional weakly special subvariety that is an irreducible component of a coset of the form $\Hvar{p}[c]$, with $\vect{p} \in (\End(G))^n \setminus \{\vect{0}\}$ and $c \in G(\Qbar)$, and the intersection is tangent, then the height of $P$ is bounded in terms of the height of $\vect{p}$.

\begin{prop}\label{prop:tangentheight}
Let $G=\Gm$ or $G=E$ with $E$ an elliptic curve defined over a number field $K$.
Let $n$ be a positive integer and let $\Ccal \subseteq \Gcal=G^n$ be a curve again defined over $K$, not contained in any proper weakly special subvariety of $\Gcal$. 
Then, there exists a constant $\delta_2>0$, depending only on $\Ccal$ (and $\Gcal$) with the following property:
for every $\vect{p} \in (\End(G))^n \setminus \{\vect 0\}$ and $c \in G(\Qbar)$, for every irreducible component $H$ of $\Hvar{p}[c]$ and
for every $P \in \mathcal C(\Qbar)$ 
such that $\mathcal C$ and $H$ intersect at $P$ tangentially,
it holds that
\[
h(P) \le \delta_2(1+h(\vect{p})),
\]
where $h(\vect{p})$ is the projective height of $\vect{p} \in \P^{n-1}$.
\end{prop}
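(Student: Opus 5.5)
By Corollary \ref{coroll:tangentplane}, if $\Ccal$ is tangent to $H$ at $P$, then $\gamma_\Ccal(P)$ lies on the hyperplane $L_{\vect{p}}:\ p_1z_1+\cdots+p_nz_n=0$ of $\P^{n-1}$. The height of $P$ will therefore be controlled through its Gauss image, in two steps. First I bound $h(\gamma_\Ccal(P))$ in terms of $h(\vect p)$. Then I pull this bound back along $\gamma_\Ccal$, which is non-constant and hence finite on the curve.

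Concretely, I work with a smooth projective model $\widetilde{\Ccal}$ of the curve, with normalization map $\nu$. The Gauss map extends to a morphism $\gamma:\widetilde{\Ccal}\to\P^{n-1}$ defined over $K$. The embedding of $\Gcal$ in projective space, composed with $\nu$, gives a morphism $\widetilde{\Ccal}\to\P^N$ corresponding to a divisor $D_1$. The Gauss map corresponds to $D_2=\gamma^*\mathcal{O}(1)$, which is ample because $\gamma$ is non-constant. Non-constancy holds because otherwise $\gamma_\Ccal(\Ccal)$ would be a single point, and that point lies on some rational (or $\End(E)$-rational) hyperplane; the argument in the proof of Proposition \ref{prop:degree} then puts $\Ccal$ in a proper coset, contradicting the hypothesis.

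By the standard comparison of heights on a curve (Weil's height machine together with the fact that ample divisors dominate any divisor up to a multiple), there is a constant $c_1$ with
\[
h_{D_1}(Q)\le c_1\, h_{D_2}(Q)+c_1
\]
for all $Q\in\widetilde{\Ccal}(\Qbar)$. This uses $\deg D_2>0$ and the equality $h_{D_2}=h\circ\gamma+O(1)$. Finitely many points of $\Ccal$ (singular points and points at the boundary) are simply absorbed into the constant. It then suffices to show
\[
h(\gamma(Q))\le c_2(1+h(\vect p))\qquad\text{whenever }\gamma(Q)\in L_{\vect p}.
\]

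For this inequality I use an arithmetic Bézout bound. The image $\Gamma=\gamma(\widetilde{\Ccal})$ is a fixed curve in $\P^{n-1}$ not contained in $L_{\vect p}$, so $\Gamma\cap L_{\vect p}$ is a finite set of at most $\deg\Gamma$ points. The arithmetic Bézout theorem (Bost--Gillet--Soulé, or Philippon's version) bounds the sum of the heights of the points of $\Gamma\cap L_{\vect p}$ by
\[
\deg\Gamma\cdot h(L_{\vect p})+h(\Gamma)+c\deg\Gamma,
\]
and $h(L_{\vect p})=h(\vect p)$. In the CM case I identify $\End(E)$ with an order in $K$, so that $\vect p$ has coordinates in $K$ and $h(\vect p)$ is its projective height. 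Since heights are nonnegative, each intersection point individually satisfies the required bound.

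As a more elementary alternative, I can parametrize locally: choose $n-2$ linear forms cutting $\Gamma$ together with $L_{\vect p}$, and take a resultant in the coefficients of $\vect p$. A resultant is polynomial of bounded degree in $\vect p$, which gives the same linear dependence on $h(\vect p)$.

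Combining the two steps gives $h(P)\le \delta_2(1+h(\vect p))$. The main obstacle I expect is the careful justification of the height comparison along $\gamma$, namely that $h_{D_1}\ll h_{D_2}+1$ with the ampleness hypothesis checked. A second delicate point is the elliptic case: there I must confirm that the Gauss map is an algebraic morphism (Raynaud) and that the chosen projective embedding of $E^n$ yields the Weil height used in the statement. Everything else is standard height theory.
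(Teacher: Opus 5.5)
Your proposal is correct, and it follows the paper's outline. You use $\gamma_\Ccal(P)\in L_{\vect p}$ and $\gamma_\Ccal(\Ccal)\not\subseteq L_{\vect p}$ to bound $h(\gamma_\Ccal(P))$ linearly in $h(\vect p)$, then pull back along the non-constant Gauss map. Your pull-back step (ampleness of $\gamma^*\mathcal{O}(1)$ on the normalization plus the height machine) is the curve case of the Silverman--Habegger inequality, Proposition \ref{prop:heightLowerBound}, which the paper cites. So nothing changes there.

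The real difference is in the key estimate, the paper's Proposition \ref{prop:hyperplaneheight}. The paper proves it with soft functorial tools:
\begin{itemize}
\item the algebraic map $\phi_1\colon \vect p\mapsto \Ccal'\cap_m L_{\vect p}$ into $Sym^d(\Ccal')$;
\item the upper bound for morphisms;
\item Silverman's lower bound for the finite quotient $(\Ccal')^d\to Sym^d(\Ccal')$;
\item a projection.
\end{itemize}
You quote arithmetic B\'ezout instead. Your route gives explicit constants: coefficient $\deg\Gamma$ in front of $h(\vect p)$, and an additive term controlled by $h(\Gamma)$. It also avoids fixing a projective embedding of the symmetric power (the issue in the paper's footnote), at the cost of importing Arakelov intersection theory. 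The paper's route needs only Weil-height functoriality and the algebraicity of $\phi_1$, but its constants are not explicit.

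Two small remarks. First, your claim that the constant value of $\gamma$ ``lies on some rational hyperplane'' holds only because a tangency point $P$ exists; a general point of $\P^{n-1}$ lies on no such hyperplane. This is exactly how the paper argues, and you should say so. Second, your resultant alternative is too vague as written (a curve is not cut out by linear forms). Its clean form uses the Chow form of $\Gamma$: specialized at $\vect p$, it factors into linear forms attached to the points of $\Gamma\cap L_{\vect p}$, and Gelfond's inequality then bounds their heights.
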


To prove the proposition, we first recall some properties of the height functions with respect to morphisms.

\begin{prop}[{\cite[Theorem B.2.5(a)]{Hindry2000}}]
\label{prop:heightUpperBound}
Let $\varphi:\P^n \dashrightarrow \P^m$ be a rational map of degree $d$ defined over $\Q$, so that $\varphi$ is given by a $(n + 1)$-tuple $\varphi = (f_0, \ldots , f_m)$ of homogeneous
polynomials of degree $d$. Let $Z \subseteq \P^n$ be the subset of common zeros of
the $f_i$'s. Then, $\varphi$ is defined on $\P^n \setminus Z$ and for all $P \in \P^n(\Qbar) \setminus Z$
\[
h(\varphi(P)) \leq d h(P) + O(1).
\]
\end{prop}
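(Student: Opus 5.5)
The plan is the standard local estimate, place by place, for the homogeneous polynomials $f_0,\dots,f_m$, summed over all places and normalised by the local degrees. First I normalise $\varphi$. Multiplying all $f_i$ by a common nonzero integer changes neither $\varphi$ nor $Z$, so I may assume every $f_i$ has coefficients in $\Z$. Let $N=\binom{n+d}{n}$ be the number of monomials of degree $d$ in $n+1$ variables, and let $A=\max_{i,I}\abs{a_{i,I}}$ be the largest absolute value of the coefficients $a_{i,I}$ of the $f_i$.

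Next, let $P\in\P^n(\Qbar)\setminus Z$ and pick homogeneous coordinates $\vect{x}=(x_0,\dots,x_n)$ of $P$ in a number field $F$. Since $P\notin Z$, the vector $(f_0(\vect{x}),\dots,f_m(\vect{x}))$ is nonzero and gives projective coordinates of $\varphi(P)$; in particular $\varphi$ is defined at $P$. For each place $v$ of $F$ I bound $\max_i\abs{f_i(\vect{x})}_v$ in terms of $\max_j\abs{x_j}_v^d$.
\begin{itemize}
\item If $v$ is non-archimedean, the coefficients are integers, so $\abs{a_{i,I}}_v\le 1$. The ultrametric inequality then gives $\max_i\abs{f_i(\vect{x})}_v\le \max_j\abs{x_j}_v^d$.
\item If $v$ is archimedean, the triangle inequality gives $\max_i\abs{f_i(\vect{x})}_v\le N A\max_j\abs{x_j}_v^d$.
\end{itemize}

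Finally, I take logarithms, multiply by $[F_v:\Q_v]/[F:\Q]$ and sum over all places $v$. The archimedean local degrees sum to $[F:\Q]$, so
\[
h(\varphi(P))\le d\,h(P)+\log(NA).
\]
Here both heights are computed from the chosen coordinates, which is legitimate by the product formula: the value is independent of the representative and of the field $F$. This gives the claim with $O(1)=\log N+\log A$, a constant depending only on $\varphi$.

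There is no real obstacle. The one point needing care is to use the hypothesis $P\notin Z$, so that the image coordinates are not all zero and the height of $\varphi(P)$ is defined. I would also make sure the constant is uniform in $P$; it is, because the only data entering it are $N$ and the coefficient bound $A$.
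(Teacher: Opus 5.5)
Your proof is correct. It is the standard place-by-place argument behind \cite[Theorem B.2.5(a)]{Hindry2000}, which the paper quotes without reproving: normalise to integer coefficients, bound $\max_i\abs{f_i(\vect{x})}_v$ by $\max_j\abs{x_j}_v^d$ at finite places and by $NA\max_j\abs{x_j}_v^d$ at infinite ones, then sum with the local degrees. The same computation works verbatim for coefficients in a number field, with $\log A$ replaced by the projective height of the coefficient vector; this is the form the paper actually needs, since it applies the result to maps defined over $K$.
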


\begin{prop}[{\cite[Theorem 2.3]{Habegger2016}, from \cite[Theorem 1]{Silverman11}}]
\label{prop:heightLowerBound}
Let $X, Y$ be irreducible, quasi-affine varieties (where we suppose to fix a projective embedding and hence a height), defined over $\Qbar$, with $\dim X = \dim Y$.
Let $g: X \rightarrow Y$ be a dominant morphism.
Then, there exists a constant $\eta>0$, depending on $X$ and $Y$ and an open and dense subset $U$ of $X$ such that
\[
h(Q) \le \eta(1+h(g(Q)))
\]
for all $Q \in U(\Qbar)$.
Moreover, this estimate holds true with
\[
U = \{ P \in X : P \text{ is isolated in } \varphi^{-1}( \varphi(P) )\},
\]
which is Zariski open in $X$.
\end{prop}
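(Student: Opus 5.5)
Since Proposition~\ref{prop:heightLowerBound} is quoted from \cite{Habegger2016, Silverman11}, we only sketch how one would prove it. The idea is to turn the algebraic dependence of the coordinates of $Q$ over the coordinates of $g(Q)$ into a height inequality, using that a root of a polynomial has height controlled by the heights of its coefficients.

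\emph{Step 1: setup.} Replace $X$ by the graph $\Gamma_g \subseteq X \times Y$. The projection $\Gamma_g \to X$ is an isomorphism, and the height of $Q$ is comparable, up to $O(1+h(g(Q)))$ and the fixed embeddings, to the height of $(Q,g(Q))$ in a Segre embedding. Since $\dim X = \dim Y$ and $g$ is dominant, the extension of function fields $\Qbar(X)/\Qbar(Y)$ is finite. Hence each affine coordinate function $x_i$ on $X$ satisfies a minimal polynomial
\[
F_i(T) = \sum_j \alpha_{ij} T^j, \qquad \alpha_{ij} \in \Qbar(Y).
\]
Clearing denominators, we may take the $\alpha_{ij}$ to be polynomials in the coordinates of $Y$, of bounded degree.

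\emph{Step 2: a bound on a dense open set.} On the open set $U_0 \subseteq X$ where the leading coefficients $\alpha_{i,\deg F_i}(g(Q))$ do not vanish, $x_i(Q)$ is a root of the nonzero polynomial $F_i(g(Q), T)$. The standard estimate for the height of a root in terms of the height of the coefficient vector, together with Proposition~\ref{prop:heightUpperBound} applied to the map $y \mapsto (\alpha_{ij}(y))_j$, gives
\[
h(x_i(Q)) \le \eta_i\bigl(1 + h(g(Q))\bigr).
\]
Summing over $i$ yields the inequality on $U_0$.

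\emph{Step 3: extending to the isolated-point locus, by Noetherian induction.} The complement $X \setminus U_0$ is a proper closed subset. Its irreducible components $X'$ fall into two classes.
\begin{itemize}
\item If $g|_{X'}$ has image of dimension $\dim X'$, rerun Step 2 on $X' \to \overline{g(X')}$ and recurse.
\item If $g|_{X'}$ has positive-dimensional generic fibres, then generically on $X'$ points are not isolated in their fibre. The remaining isolated points of $X'$ lie in a proper closed subset, on which we recurse.
\end{itemize}
Since dimensions drop, the recursion terminates, and taking the maximum of the finitely many constants gives $\eta$.

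\emph{Main obstacle.} The delicate point is that a point isolated in $g^{-1}(g(Q))$ (a condition on $X$) need not be isolated in the fibre of the restriction to a component $X'$ with the correct dimension count, and leading coefficients might vanish identically on strata. The cleanest fix I would try first is Zariski's Main Theorem. The quasi-finite locus $U$ is open, and $g|_U$ factors as an open immersion $U \hookrightarrow X''$ followed by a finite morphism $X'' \to Y$. For a finite morphism, the coordinates of $X''$ are integral over the coordinate ring of $Y$, so there are monic equations with no vanishing leading coefficient. The root-height bound of Step 2 then holds on all of $X''$. Finally, one compares the heights on $U$ induced from $X$ and from $X''$; by Proposition~\ref{prop:heightUpperBound} applied to the rational map between them, these agree up to a linear bound. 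This comparison is where the real work lies.
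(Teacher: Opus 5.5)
The paper gives no proof of this proposition: it is quoted from Habegger's Theorem~2.3 (after Silverman's Theorem~1), so there is no internal argument to compare with. Your sketch is nevertheless a correct proof. In fact your first route already closes without the repair you propose, because the ``main obstacle'' you describe does not exist.

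First, $(g|_{X'})^{-1}(g(Q)) = g^{-1}(g(Q))\cap X'$ is a closed subset of $g^{-1}(g(Q))$. So if $Q$ is isolated in $g^{-1}(g(Q))$, it is a fortiori isolated in the fibre of $g|_{X'}$, and this is exactly the direction you need.

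Second, if $\dim \overline{g(X')} < \dim X'$, the fibre-dimension theorem says that every component of every nonempty fibre of $g|_{X'}$ has positive dimension. Hence $X'\cap U=\emptyset$ outright, not merely generically.

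It follows that a point $Q\in U$ only ever descends through strata $X'$ that are generically finite over $\overline{g(X')}$. After finitely many steps it lands in the dense open subset of such a stratum where the root-height bound applies; zero-dimensional strata are trivial. The worry about leading coefficients vanishing on strata is also moot. On each stratum you take fresh minimal polynomials over $\Qbar(\overline{g(X')})$, and their leading coefficients are nonzero there by construction.

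Your Zariski Main Theorem route is also fine, and the step you flag as ``where the real work lies'' is routine. You only need the one-sided inequality $h_X(Q)\le c\,(1+h_{X''}(Q))$ on $U$, not a two-sided comparison. This holds because $U\to X$ is a morphism from an open subset of $X''$. It is therefore given, on finitely many open pieces, by tuples of homogeneous polynomials in the ambient coordinates of $X''$, and Proposition~\ref{prop:heightUpperBound} applies piecewise. If $Y$ is only quasi-affine, cover it by principal affine opens before writing the monic integral equations; inverting the defining function of each open costs only a linear amount of height.

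As for what each route buys:
\begin{itemize}
\item The stratification argument is completely elementary: the height bound for roots of polynomials plus Noetherian induction.
\item The Zariski Main Theorem argument explains why the isolated-point locus is the natural domain. It is precisely the quasi-finite locus, where $g$ factors through a finite map and the coordinates satisfy monic equations.
\end{itemize}
The paper only applies this proposition to quasi-finite maps, with $U$ equal to the whole source: the nonconstant Gauss map of a curve, and the finite quotient $(\Ccal')^d\to Sym^d(\Ccal')$. So the second route covers exactly what is used.
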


In order to prove Proposition \ref{prop:tangentheight}, we recall the Gauss map introduced in Section \ref{sec:gauss}:

\[
\gamma_\Ccal: \Ccal^{sm} \rightarrow \P^{n-1} = \P T_e \mathcal G.
\]

Possibly removing the finitely many singular points, we will assume $\Ccal^{sm}=\Ccal$ and recall that, since $P \in \Ccal \cap_{sing}\Hvar{p}[c]$, by Corollary \ref{coroll:tangentplane} $\gamma_\Ccal(P)$ is contained in the hyperplane $L$ given by the equation:
\[
p_1 z_1+p_2 z_2+\cdots+p_n z_n=0,
\]
where $z_1,z_2,\ldots,z_n$ are the coordinates of $\P^{n-1}$. 
As in Section \ref{sec:no_coset}, if $\gamma_\Ccal (\Ccal)$ were contained in $L$, this would imply that $\Ccal$ is contained in a one-codimensional weakly special subvariety.
Since by assumption this is not the case, the set $\gamma_\Ccal^{-1}(\gamma_\Ccal(\Ccal) \cap L)$ is a finite set containing $P$ (as in Section \ref{sec:no_coset}, $\gamma_\Ccal$ is not constant, since $\gamma_\Ccal(\Ccal)$ is not contained in $L$).\\

By Proposition \ref{prop:heightLowerBound} we just need to bound the height of any element of $\gamma_\Ccal(\Ccal) \cap L$ (notice that $\Ccal$ and $\gamma_\Ccal(\Ccal)$ are already embedded in $\P^n$ and $\P^{n-1}$, respectively).
This amounts to proving the following.

\begin{prop}\label{prop:hyperplaneheight}

Let $s$ be a positive integer and $\Ccal' \subseteq \P^s$ be an irreducible curve defined over a number field $K$. Then, there is a constant $\delta'>0$, depending only on $\mathcal C'$, such that, for every hyperplane $L_{\vect{p}}$  given by the equation:
\[
p_0 z_0+p_1 z_1+\cdots+p_s z_s=0,
\]
with $\vect{p}=(p_0,p_1,\ldots,p_s) \in \Qbar^{s+1}\setminus \{\vect 0\}$, not containing $\mathcal C'$, and for every point $P \in \Ccal' \cap L$, one has
\[
h(P) \le \delta'(1+h(\vect{p})),
\]
where $h(\vect{p})$ is the projective height of $\vect{p} \in \P^s$.
\end{prop}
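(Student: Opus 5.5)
We reduce to a one-variable situation by projecting the curve onto a line and then solving for the intersection point. Since $\Ccal'$ is irreducible of dimension one, a generic linear projection $\pi:\P^s \dashrightarrow \P^1$ defined over $K$ restricts to a finite morphism $\Ccal'\to\P^1$. To realise this, choose coordinates (after a $K$-linear change, which changes heights of points and of $\vect p$ only by $O(1)$) so that $z_0,z_1$ have no common zero on $\Ccal'$. Then $\pi=[z_0:z_1]$ is finite on $\Ccal'$. By Proposition \ref{prop:heightLowerBound} applied to $\pi|_{\Ccal'}$, whose fibres are all finite so the open set $U$ is all of $\Ccal'$, together with Proposition \ref{prop:heightUpperBound}, we get $h(P)\le \eta(1+h(\pi(P)))$. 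It therefore suffices to bound $h(t)$, where $t=z_1/z_0$ (or its inverse) evaluated at $P$.

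\textbf{Step 1: the affine setting.} Each $z_i/z_0$ is a rational function on $\Ccal'$, hence integral over $K[t]$ away from the poles. So each satisfies a monic-type relation $F_i(t,z_i/z_0)=0$ with $F_i\in K[T,Z]$ depending only on $\Ccal'$.

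\textbf{Step 2: eliminating the coordinates.} Restricted to $\Ccal'$, the linear form $\ell_{\vect p}=\sum p_i z_i/z_0$ is a function on $\Ccal'$ whose zeros are the points of $\Ccal'\cap L_{\vect p}$. Take its norm down to $K(\vect p)(t)$:
\[
N_{\vect p}(t)=\prod_\sigma \Bigl(p_0+\sum_{i\ge1} p_i\,\sigma(z_i/z_0)\Bigr),
\]
the product running over the embeddings of $K(\Ccal')$ over $K(t)$. After clearing a denominator fixed by $\Ccal'$, $N_{\vect p}(t)$ is a polynomial in $t$ whose coefficients are homogeneous polynomials of degree $m=[K(\Ccal'):K(t)]$ in $\vect p$, with coefficients in $K$ depending only on $\Ccal'$. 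Moreover its $t$-degree is bounded independently of $\vect p$.

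\textbf{Step 3: bounding the root.} $N_{\vect p}$ is not identically zero, because $L_{\vect p}$ does not contain $\Ccal'$. The value $t(P)$ is a root of it, or $P$ lies over $t=\infty$, which is handled symmetrically by swapping $z_0$ and $z_1$. The standard bound for roots of a nonzero polynomial gives
\[
h(t(P))\le h(\text{coefficient vector})+\log 2\cdot \deg \le m\,h(\vect p)+O(1).
\]
Combining this with Step 1 yields $h(P)\le\delta'(1+h(\vect p))$.

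\textbf{Main obstacle.} The delicate point is verifying that $N_{\vect p}$ is nonzero as a polynomial and that its coefficient vector is a fixed polynomial map of $\vect p$. The danger is that all coefficients vanish, which would give no constraint on the root. Nonvanishing follows because the norm of a nonzero function is nonzero. The coefficient-vector claim follows since the norm is a polynomial in $\vect p$ with coefficients in $K[t]$. One also needs the same reduction to hold uniformly at points of $\Ccal'$ where the chosen coordinate $z_0$ vanishes; this is handled by covering $\Ccal'$ with the two charts $z_0\ne0$ and $z_1\ne0$ and taking the maximum of the two constants.
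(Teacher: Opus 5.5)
Your proof is correct, and it takes a genuinely different route from the paper.

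\textbf{The paper's route.} It sends $\vect p$ to the intersection cycle $\Ccal'\cap_m L_{\vect p}\in Sym^d(\Ccal')$ and bounds the height of this image with Proposition \ref{prop:heightUpperBound}. It then lifts to $(\Ccal')^d$ through the finite quotient map, using the Silverman--Habegger lower bound (Proposition \ref{prop:heightLowerBound}), and finally projects to one factor.

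\textbf{Your route.} You project finitely to $\P^1$ and eliminate via the norm $N_{K(\Ccal')/K(t)}(\ell_{\vect p})$, which is in effect the Chow form of $\Ccal'$ evaluated on the pencil. Everything then reduces to the elementary bound for the height of a root of a nonzero polynomial in terms of the height of its coefficient vector.

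\textbf{What each buys.} Your argument is more elementary and explicit: the slope is essentially $m=\deg(\pi|_{\Ccal'})$ times the constant from the integral equations of Step 1. It needs neither Knutson's embedding of $Sym^d(\Ccal')$ nor Proposition \ref{prop:heightLowerBound}. Your Step 1 already gives $h(P)\ll 1+h(t(P))$, and that proposition is stated only for quasi-affine varieties anyway. Your argument also handles cleanly a point the paper glosses over. Proposition \ref{prop:heightUpperBound} only applies off the base locus of whatever polynomial extension of $\phi_1$ one chooses. By contrast, your estimate $h(\text{coefficients of }N_{\vect p})\le m\,h(\vect p)+O(1)$ holds exactly when $N_{\vect p}\neq 0$, i.e.\ exactly under the hypothesis $\Ccal'\not\subseteq L_{\vect p}$. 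The paper's argument, in turn, is more conceptual and adapts more readily to divisors varying in other parameter spaces.

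\textbf{Minor points to tighten.}
(i) Since $\pi$ is finite, the coordinate ring $A$ of $\Ccal'\cap\{z_0\neq 0\}$ is finite free over $K[t]$, and likewise after base change to $\Qbar$. So the norm already lies in $K[t][\vect p]$ and there is no denominator to clear. This matters: with a genuine denominator, vanishing at $t(P)$ would not be automatic.
(ii) Cayley--Hamilton gives $N(\ell_{\vect p})\in \ell_{\vect p}A_{\Qbar}$. This is the one-line justification that $t(P)$ is a root whenever $\ell_{\vect p}(P)=0$.
(iii) Points with $z_0(P)=0$ map to $[0:1]$, which has height $0$, so the second chart is unnecessary.
(iv) You should first replace $\Ccal'$ by its closure. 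You should also assume geometric irreducibility, as the paper tacitly does, so that $\ell_{\vect p}\neq 0$ on $\Ccal'_{\Qbar}$ really gives a nonzero norm.
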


\begin{proof}
Recall that the Grassmannian $(\P^{s})^\vee$ parameterizing hyperplanes in $\P^s$, also known as its dual, is isomorphic to $\P^s$.
This isomorphism is induced by the choice of a projective basis: we choose the canonical one, and denote by $z_0, \dots z_s$ the coordinates (with respect to this basis) of the starting $\P^s$ and with $p_0,\ldots,p_s$ the coordinates in the Grassmanian $(\P^{s})^\vee$ with respect to the dual basis.
In other words, the point $[p_0 : \dots : p_s] \in (\P^{s})^\vee$ represents the hyperplane $L_{\vect{p}}$ of $\P^s$ given by the equation $p_0 z_0+p_1 z_1+\cdots+p_s z_s=0$.
\medskip

Let $V \subseteq (\P^{s})^\vee$ be the subset consisting of $\vect{p} \in (\P^{s})^\vee$ such that $L_{\vect{p}}$ does not contain $\Ccal'$; 
$V$ is an open subset of $(\P^{s})^\vee$ (since the requirement of containing a prescribed point is a closed condition and therefore $(\P^{s})^\vee \setminus V$ is an intersection of closed sets).

Our strategy consists in constructing some morphisms which behave well with respect to height in order to bound the height of $P$ from the height of $\vect{p}$.
\medskip

Let us consider $Sym^d(\Ccal') = (\Ccal')^d/S_d$ the $d$-th symmetric power of $\Ccal'$.
From \cite[Prop IV.1.5, p.180]{Knutson71}, $Sym^d(\Ccal')$ is a quasi-projective variety, so we can assume that it is embedded into a projective space, from which it inherits a height function. \footnote{A priori the constants that appear from now on will, and do in fact, depend on the choice of such projective embedding. However, the proof by Knutson construct an explicit projective embedding, and we may assume to have chosen that one.}

We can suppose that $\Ccal'$ is closed in $\P^s$ (this would just enlarge $\Ccal' \cap L_{\vect{p}}$). 
By Bézout's Theorem, there is a positive integer $d$ such that, for any $\vect{p} \in V$, we have that $L_{\vect{p}}$ intersects $\Ccal'$ in precisely $d$ points (counted with multiplicity). This gives rise to the following morphism, defined over $K$:
\[
\begin{matrix}
\phi_1: &V &\rightarrow &Sym^d(\Ccal') \\
&(\vect{p}) & \mapsto &\Ccal' \cap_m L_{\vect{p}},
\end{matrix}
\]

where $\Ccal' \cap_m L_{\vect{p}}$ denotes the schematic intersection of $\Ccal'$ and $L_{\vect{p}}$, taking into account multiplicities.
Since $Sym^d(\Ccal')=(\Ccal')^d/S_d$ is the quotient of $(\Ccal')^d$ by the finite group $S_d$, the quotient map defines a finite morphism, defined over $K$:
\[
\begin{matrix}
\phi_2:&(\Ccal')^d &\rightarrow  &Sym^d(\Ccal')\;\;\\
&(P_1,\ldots,P_d) &\mapsto &(P_1,\ldots,P_d)_{no}
\end{matrix}
\]

where the $d$-uple is ordered in the LHS and non-ordered in the RHS (we use the subscript $\text{}_{no}$ to denote this).
Finally, we consider the projection on the first coordinate:
\[\begin{matrix}
\phi_3: &(\Ccal')^d  &\rightarrow &\Ccal' \\
&(P_1,\ldots,P_d) &\mapsto& P_1
\end{matrix}\]

To summarize, we have the following diagram:
\[\begin{tikzcd}[column sep=-0.5em]
	& {\,(\Ccal')^d\,} && {\hspace{-0.5 em} V\subseteq(\P^{s})^\vee} \\
	{\quad\Ccal'\quad} && {\hspace{-1 em}Sym^d(\Ccal') \hspace{-1em}}
	\arrow["{\phi_3}"', from=1-2, to=2-1]
	\arrow["{\phi_2}", from=1-2, to=2-3]
	\arrow["{\phi_1}"', from=1-4, to=2-3]
\end{tikzcd}\]

We will now see that these maps behave well with respect to height.

Since $V$ is an open set of $(\P^{s})^\vee$, the map $\phi_1$ extends (formally) to a rational map between projective spaces.

Therefore, by Proposition \ref{prop:heightUpperBound}, there exists a constant $\eta_1>0$ such that, for all $\vect{p} \in V(\Qbar)$, we have
\[
h(\phi_1(\vect{p}))\leq \eta_1(1+h(\vect{p})).
\]
If we consider $(\Ccal')^d \subseteq (\P^s)^d \subseteq \P^{(s+1)^d-1}$, then the map $\phi_3$ can also be extended to a rational map between projective spaces such that it is defined on $(\Ccal')^d$, and in a similar way we get that there exists a constant $\eta_3>0$ such that, for all $(P_1, \dots, P_d) \in (\Ccal')^d (\Qbar)$,
\[
h(P_1) = h(\phi_3((P_1,\ldots,P_{d})))\leq \eta_3(1+h((P_1,\ldots,P_{d})).
\]

\medskip
Let us now focus on $\phi_2$; this is the quotient map by a finite group, so
$(\Ccal')^d$ and $Sym^d(\Ccal')$ have the same dimension, $\phi_2$ is surjective, and all the fibers are finite; 
in particular, all the points $P \in (\Ccal')^d$ are isolated in their fiber.
We can then apply Propostition \ref{prop:heightLowerBound} with $U=(\Ccal')^d$, which gives us a constant $\eta_2>0$ such that
\[
h((P_1,\ldots,P_{d}))\leq\eta_2(1+h(\phi_2((P_1,\ldots,P_{d})))=\eta_2(1+h((P_1,\ldots,P_{d})_{no})).
\]
Let now be $P \in \Ccal' \cap L_{\vect{p}}$. 
We can choose points $Q_1,\ldots,Q_{d-1}$ in $\Ccal'$ such that the element $(P,Q_1,\ldots,Q_{d-1})_{no}=\Ccal' \cap_m L_{\vect{p}}$. We have:
\begin{align*}
\phi_1(\vect{p})&=(P,Q_1,\ldots,Q_{d-1})_{no} \\
\phi_2(P,Q_1,\ldots,Q_{d-1})&=(P,Q_1,\ldots,Q_{d-1})_{no} \\
\phi_3(P,Q_1,\ldots,Q_{d-1})&=P. 
\end{align*}
We thus have
\begin{align*}
h(P)& \le \eta_3 (1+h((P, Q_1, \ldots, Q_{d-1})))\\
& \le\eta_3(1 + \eta_2(1 + h((P, Q_1, \ldots, Q_{d-1})_{no})))\\
& \le \eta_3(1 + \eta_2(1 + \eta_1(1 + h(\vect{p}))))
\end{align*}
Putting $\delta' = (\eta_1 + 1)(\eta_2 +1)(\eta_3+1)$ we conclude.
\end{proof}

\begin{remark}
    An alternative proof of Proposition \ref{prop:hyperplaneheight}, under the additional condition that $\vect{p}=(p_0,p_1,\ldots,p_s) \in \Q^{s+1}$, has also been provided by the first named author in his PhD thesis \cite[\S 5.5]{BalliniTesi}, using Puiseux series.
    Such additional condition is satisfied whenever we are considering subgroups of $\Gm^n$ or of elliptic curves without complex multiplication.
\end{remark}


\section{Proof of Theorem \ref{thm:cspecial}} 

This section is dedicated to proving Theorem \ref{thm:cspecial}.
We recall the notation introduced in the statement.
Let $G=\Gm$ or $G=E$, for $E$ an elliptic curve defined over a number field $K$.
Let $k$ and $d$ be positive integers, let $\vect{\xi}=(\xi_1,\ldots,\xi_k)$ be a vector of independent non-zero elements of $G(K)$ and let $\Ccal \subseteq \Gcal = G^d$ be a curve defined over $K$ which is not contained in any proper weakly special subvariety. 
Fix a point $P \in \Ccal$ such that there is some $\vect{\xi}$-special hypersurface $U$ with $\Ccal$ tangent to $U$ at $P$.

Let us assume that $U$ is contained in the zero locus of
\[
\varphi_{\vect{p}_U,d}(\vect{y}) = \varphi_{-\vect{q}_U ,k}(\vect{\xi}),
\]
where $\vect{p}_U, \vect{q}_U$ are tuples of endomorphisms of $G$ such that not all of the $p_i$ are zero.
We want to show that we can assume $\vect{p}_U$ and $\vect{q}_U$ to be sufficiently small in an appropriate sense.

Let $\Gamma$ be the submodule of $\End(G)^{d+k}$ consisting of the tuples $(a_1,\ldots,a_d,b_1,\ldots,b_k)$ such that:
\[
\varphi_{\vect{a}, d}(P) = \varphi_{-\vect{b}, k}(\vect{\xi}),
\]
and let $\Gamma_{\textrm{sing}}$ be the submodule of $\Gamma$ consisting of the tuples $(a_1,\ldots,a_d,b_1,\ldots,b_k)$ such that the curve $\Ccal$ is tangent at $P$ to the coset cut by the equation
\[
\varphi_{\vect{a}, d}(\vect{y}) = \varphi_{-\vect{b}, k}(\vect{\xi}).
\]

Recall that, for a module $M$ over an integral domain $\mathcal{O}$ with field of fractions $K$, the \emph{rank} of $M$ is defined as the rank of a maximal free $\mathcal{O}$-submodule contained in $M$, or, equivalently, with the $K$-dimension of $M \otimes_\mathcal{O} K$.

Notice that the ranks of $\Gamma$ and $\Gamma_{\textrm{sing}}$ are at least one, since $P \in U$ and the intersection of the curve with $U$ at $P$ is tangent, so $(\vect{p}_U, \vect{q}_U) \in \Gamma_{\textrm{sing}} \subseteq \Gamma$.
There are then two possibilities:

\begin{enumerate}
\item If $\rk (\Gamma) \ge 2$, then there are finitely many possible values for $P$ as a consequence of Zilber-Pink-like statements for a curve in a power of $\Gm$, due to Maurin \cite[Théorème 1.6]{Maurin08}, or in a power of an elliptic curve, due to Viada \cite{Viada2003, Viada2008}, Rémond and Viada \cite[Théorème 1.5-1.6]{Remond2003} and Galateau \cite[Théorème 1.10]{Galateau2010}.

\item If, on the other hand, $\rk(\Gamma)=\rk(\Gamma_\textrm{sing})=1$,
let $r=[\End(G):\Z]$, so $r=1$ for $G=\Gm$ or $G=E$ for a regular elliptic curve, and $r=2$ for $G=E$ a CM elliptic curve.
We also define
\[
\widetilde{h}_{\textrm{aff}}(P):= \max{\{h(\xi_1),\ldots,h(\xi_k),h(y_1(P)),\ldots,h(y_d(P))\}}.
\]
By a consequence of a result of Masser \cite{Masser88} (see \cite[Lemmas 5.1 and 5.2]{BarroCapuano17} and the proof of \cite[Lemma 6.1]{BarroeroCM}),  
for an absolute constant $\eta_{d+k}>0$,
the module $\Gamma$ contains a non-zero element $(p_1,\ldots,p_d,q_1,\ldots,q_k)$ with
\begin{align} \label{eq:height_bound}
\max  \{\abs{p_1},\ldots &,\abs{p_d},\abs{q_1},\ldots,\abs{q_k}\} \le \\
&\le \eta_{d+k} [K(P):K]^{r(d+k)+1} (1+\widetilde{h}_{\textrm{aff}}(P))^{r(d+k)-1}. \nonumber
\end{align}
We will need a similar small element in $\Gamma_{sing}$. This is ensured by the following.
\begin{lemma}
Let $F$ be the field of fractions of $\End(G)$. Then $F^{d+k} \supseteq F\Gamma_{sing} \cap \Gamma = \Gamma_{sing}$.
In particular, the element $(p_1,\ldots,p_d,q_1,\ldots,q_k)$ above lies in $\Gamma_{sing}$.
\end{lemma}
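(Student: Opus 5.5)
The plan is to describe $\Gamma_{sing}$ as the intersection of $\Gamma$ with the kernel of a linear functional that is defined over $F$. Since $\Gamma_{sing}\subseteq\Gamma$, and $F\Gamma_{sing}$ will turn out to be contained in an $F$-subspace cut out by a linear condition, the equality $F\Gamma_{sing}\cap\Gamma=\Gamma_{sing}$ reduces to showing that the tangency condition is $F$-linear in $(\vect{a},\vect{b})$ and depends only on $\vect{a}$.

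First I rewrite the tangency condition. Let $(\vect{a},\vect{b})\in\Gamma$. Then $P$ lies on the coset cut out by $\varphi_{\vect{a},d}(\vect{y})=\varphi_{-\vect{b},k}(\vect{\xi})$, which is a translate of the subgroup $\ker\varphi_{\vect{a},d}$. The coset's Lie algebra is the hyperplane $a_1z_1+\cdots+a_dz_d=0$ in $T_e G^d$; if $\vect{a}=\vect 0$ the coset is all of $G^d$ and every condition holds trivially. The proposition preceding Corollary \ref{coroll:tangentplane}, applied to each irreducible component through $P$, gives the following. $\Ccal$ is tangent at $P$ exactly when $\gamma_\Ccal(P)=[w_1:\dots:w_d]$ satisfies $\lambda(\vect{a},\vect{b}):=\sum_i a_iw_i=0$, where $\vect{a}$ acts through its analytic representation, i.e. its image in $F\subseteq\C$. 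We take $P$ to be a smooth point, discarding finitely many singular points. So $\Gamma_{sing}=\Gamma\cap\ker\lambda$, with $\lambda:F^{d+k}\to\C$ an $F$-linear map, because $\End(G)$ acts on $\operatorname{Lie}G$ by multiplication by the corresponding element of $F$.

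Next comes the lattice step. $\ker\lambda$ is an $F$-subspace and contains $\Gamma_{sing}$, so $F\Gamma_{sing}\subseteq\ker\lambda$. Hence $F\Gamma_{sing}\cap\Gamma\subseteq\ker\lambda\cap\Gamma=\Gamma_{sing}$. The reverse inclusion is obvious, which proves the equality.

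For the \emph{In particular}, I use the rank hypothesis $\rk\Gamma=\rk\Gamma_{sing}=1$. Any nonzero element $v$ of $\Gamma$, and in particular Masser's small element, lies in $F\Gamma\cap\Gamma$. Since $F\Gamma=\Gamma\otimes F$ has $F$-dimension $1$ and $F\Gamma_{sing}\subseteq F\Gamma$ is also $1$-dimensional, we get $F\Gamma=F\Gamma_{sing}$. Thus $v\in F\Gamma_{sing}\cap\Gamma=\Gamma_{sing}$. One also has to check that $v$ has $\vect{p}\neq\vect 0$, so that it defines a genuine $\vect{\xi}$-special hypersurface. If $\vect{p}=0$ then $\varphi_{-\vect{q},k}(\vect{\xi})=O$ with $\vect{q}\neq0$, contradicting the independence of the $\xi_i$.

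The only delicate point I expect is making sure the tangency condition is really the linear condition on the $\vect{a}$-part of the tuple. In particular, I need to verify that passing to the irreducible component through $P$ doesn't alter the tangent hyperplane; it doesn't, since all components are translates of the same connected subgroup. I also need that in the CM case the identification of $\End(E)$ with an order in $F\subset\C$ matches its action on the tangent space, so that $\lambda$ is genuinely $F$-linear.
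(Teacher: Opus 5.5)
Your proof is correct and is essentially the paper's argument: both rest on the fact that tangency at $P$ is the single linear condition $\sum_i a_i w_i = 0$ on the coefficient vector, which is stable under $F$-scaling. The paper phrases this by rescaling with some $\mu\in F^\times$, while you phrase it as $\Gamma_{sing}=\Gamma\cap\ker\lambda$ for an $F$-linear $\lambda$; your explicit deduction of the \emph{in particular} from $\rk\Gamma=\rk\Gamma_{sing}=1$, and your check that $\vect{p}\neq\vect{0}$ via the independence of the $\xi_i$, fill in steps the paper leaves implicit.
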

\begin{proof}
Clearly, $F\Gamma_{sing} \cap \Gamma \supseteq \Gamma_{sing}$, so we only need to prove the reverse inclusion.
Recall that in Section \ref{sec:general_case} we defined the map $\iota$ by
\[\begin{matrix}
    \iota:& G^d & \to &G^d \times G^k \\
    & \vect{y} & \mapsto & (\vect{y}, \vect{\xi}).
\end{matrix}\]
We will, throughout this proof, tacitly apply Lemma \ref{lemma:ctangent}.

Let $(a_1,\ldots,a_d,b_1,\ldots,b_k) \in F \Gamma_{sing} \cap \Gamma$; then, $\iota(P)=(P, \vect{\xi})$ lies in the hypersurface $U_{\vect{a}, \vect{b} }$ of
$G^d \times G^k$ cut out by
$\varphi_{\vect{a}, d}(\vect{y}) = \varphi_{-\vect{b}, k}(\vect{z})$. 
Moreover, there exists $\mu \in F^{\times}$ such that the curve $\iota(\Ccal)$ is tangent to the hypersurface $U_{\mu \vect{a}, \mu \vect{b}}$ 
cut by $\varphi_{\mu \vect{a}, d}(\vect{y}) = \varphi_{-\mu\vect{b}, k}(\vect{z})$ in $G^d \times G^k$ at $(P, \vect{\xi})$.
We want to prove that, actually, $\iota(\Ccal)$ is tangent to $U_{\vect{a}, \vect{b}}$ at $(P, \vect{\xi})$.

Let $T=T_{(P, \vect{\xi})}(G^{d+k})$ be the tangent space in $\iota(P)$, which we identify with the tangent space at the origin, with coordinates $\partial{y_1}, \dots, \partial{y_d}, \partial{z_1}, \dots, \partial{z_k}$.
Now, the curve $\iota(\Ccal)$ is tangent to $U_{\vect{a}, \vect{b}}$ at $(P, \vect{\xi})$ if and only if we have that the line $T_{(P, \vect{\xi})}(\iota(\Ccal))$ is contained in the hyperplane $a_1 \partial{y_1} + \dots + a_d \partial{y_d} + b_1 \partial{z_1} + \dots + b_k \partial{z_k}=0$.
By assumption, $T_{(P, \vect{\xi})}\iota(\Ccal)$ is contained in $\mu a_1 \partial{y_1} + \dots + \mu a_d \partial{y_d} + \mu b_1 \partial{z_1} + \dots + \mu b_k \partial{z_k}=0$, and clearly these two are the same hyperplane, so our claim is proven.
\end{proof}

Notice that, by Proposition \ref{prop:degree}, we have an absolute upper bound on $[K(P):K]$; combining this with \eqref{eq:height_bound} we obtain:
\begin{equation} \label{eq:height_bound2}
\max{\{\abs{p_1},\ldots,\abs{p_d},\abs{q_1},\ldots,\abs{q_k}\}} \le \rho (1+h(P))^{r(m+n)-1},
\end{equation}
where $\rho>0$ is a constant depending only on $\Ccal$ and $\vect{\xi}$. 
By Proposition \ref{prop:tangentheight} we have 
\begin{equation} \label{eq:height_bound3}
h(P) \le \rho'(1+\log{\max{\{\abs{p_1},\ldots,\abs{p_d}\}}}),
\end{equation}
for a constant $\rho'>0$ depending only on $\Ccal$. 
Combining \eqref{eq:height_bound2} and \eqref{eq:height_bound3} we obtain an upper bound on $\abs{p_1},\ldots,\abs{p_n}$ and hence an upper bound on $h(P)$. 
Finally, together with the upper bound for $[K(P):K]$ given by Proposition \ref{prop:degree}, we conclude the proof of Theorem \ref{thm:cspecial} applying Northcott's theorem.

\end{enumerate}

\section{An application to divisibility sequences}
In this section we discuss an application of Theorem \ref{thm:split_semiab} to geometric divisibility sequences.

\medskip
Let $S$ be an irreducible curve. A \emph{divisibility sequence} on $S$ is a sequence of divisors $(D_n)_{n \geq1}$ of $S$ such that, for every $ m, n \in \N$, if $m \mid n$ then $D_m \mid D_n$, that is, $D_n - D_m$ is an effective divisor.
This notion is the function field analogue of an integral divisibility sequence (see \cite{BCZ2003} for results about the greatest common divisor of independent sequences of the form $a^n-1$).\\
Let $\pi:\mathcal{G} \to S$ be a group scheme with zero-section $O$.
As noted by Silverman in \cite{Silverman2005}, every section $P:S\to \mathcal{G}$ that is not identically torsion, i.e. $nP \neq O$ for all $n\geq 1$, has an associated divisibility sequence given by
\begin{equation}
\label{eqn:geom_seq}
D_{nP} := O^*(nP),
\end{equation}
i.e., $D_{nP}$ is the pull-back along the zero section of the divisor $nP$ on $\mathcal{G}$; such divisibility sequences are called \emph{geometric}.\\
A conjecture of Silverman, stated in \cite{Silverman2005}, predicts that, if $\mathcal{G}$ has relative dimension at least $2$, and the images of the multiples of $P$ are dense in the generic fiber, then
$D_{nP} = D_P$ for infinitely many $n>0$.
This question has been studied by Ailon and Rudnick \cite{Ailon_Rudnick} when $\mathcal G$ is the constant scheme $\Gm^2 \times S$, by
Silverman \cite{Silverman2004} and Ghioca, Hsia and Tucker \cite{Ghioca2018} when $\mathcal{G}$ is the product of two elliptic schemes, and by Barroero, Capuano and Turchet \cite{BCT2024} in the case of split semiabelian schemes.
The results proved in \cite{Ghioca2018} and \cite{BCT2024} are stronger than the one mentioned above: under the conditions predicted by Silverman's conjecture, the set of $n \in \N$ for which $D_{nP} = D_P$ is the complement of a finite union of arithmetic progressions.

Generalizing the setting above, if we have two sections $P$ and $Q$ of $\mathcal{G} \to S$, we can construct a sequence of divisors by replacing the zero section in \eqref{eqn:geom_seq} with $Q$, provided that $nP \neq Q$ for all $n\geq 1$.
We then get
\begin{equation}
\label{D_nP_Q_eqation}
D_{nP, Q} := Q^*(nP) = \pi_*(nP \cap Q),
\end{equation}
where, by slight abuse of notation, we are identifying $nP$ and $Q$ with their image, and the intersection is intended to be the scheme-theoretic intersection as zero-cycles.\\
These sequences have also been studied by Ghioca, Hsia and Tucker \cite{Ghioca2018} and Barroero, Capuano and Turchet \cite{BCT2024},
for $\mathcal{G}$ the product of two elliptic schemes and a split semiabelian scheme respectively (see also \cite{Ostafe_16} for some related results in the case $\Gcal= \Gm^2 \times S$).
They proved that, under the same conditions on $P$ as the above mentioned conjecture of Silverman's, $D_{nP, Q}$ is bounded by a divisor independent of $n$.

\medskip

When the relative dimension of $\mathcal{G}$ is 1, then there is no expectation that $D_{nP} = D_P$ infinitely often.
However, in this case it is interesting to study when $D_{nP}$ is reduced, that is, it is of the form $D_{nP} = \sum_i t_i$ for $t_i \in \mathcal{C}$ distinct points (equivalently, each non-zero coefficient of $D_{nP}$ is equal to 1). 
This condition is an analogue to the fact that an integer is square-free. From a geometric point of view, this amounts to the fact that the section $nP$ intersects transversally the zero section at every point.

In \cite{UU21}, the authors prove that, for $\mathcal{G}=\mathcal{E}$ an elliptic scheme and $P$ non identically torsion, $D_{nP}$ is reduced for all $n$ outside a finite union of arithmetic progressions. Using March\'e and Maurin result \cite[Theorem 1.4]{Maurin23} one can prove that the same holds when $\mathcal G$ is a constant scheme over a curve with $\Gm$ fibers.

\begin{thm}
\label{thm:toric_divisibility_seq}
Let $G= \Gm$, let $S$ be a smooth curve defined over $\Qbar$ and consider the constant scheme $\Gcal := G \times S$ with structural morphism $\pi: \Gcal \to S$.
For any section $P: S \to \Gcal$
there is a finite set of integers $M = \{m_1 , \dots , m_k \}$ such that $O$ and $nP$ intersect transversally if and only if $n$ is not divisible by any element of $M$.
\end{thm}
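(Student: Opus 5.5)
The plan is to make everything explicit. Because $\Gcal = \Gm \times S$ is a constant scheme, a section $P: S \to \Gcal$ is the same thing as a regular invertible function $f \in \mathcal{O}(S)^\times$, namely $P(s) = (f(s), s)$. Its multiples are $nP(s) = (f(s)^n, s)$, and the zero section is $O(s) = (1, s)$. Using \eqref{D_nP_Q_eqation}, the divisor $D_{nP} = O^*(nP)$ is the divisor of zeros of $f^n - 1$ on $S$. The two sections meet at $s$ exactly when $f(s)^n = 1$. They meet transversally there if and only if $\operatorname{ord}_s(f^n - 1) = 1$. In a local parameter $t$ at $s$ this is equivalent to $\frac{d}{dt}(f^n)(s) = n f(s)^{n-1} f'(s) \neq 0$. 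Since $f(s) \neq 0$ and we are in characteristic $0$, this in turn is equivalent to $df(s) \neq 0$.

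\emph{Constant case.} First I dispose of constant $f \equiv c$. If $c$ is not a root of unity, then $nP$ and $O$ never meet, so they are vacuously transversal for all $n$, and $M = \emptyset$. If $c$ is a root of unity of exact order $m$, then $nP \cap O = \emptyset$ for $m \nmid n$. For $m \mid n$ we get $nP = O$, which is certainly not a transversal intersection. So $M = \{m\}$ works.

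\emph{Nonconstant case.} Now let $f$ be nonconstant. Then $df$ is a nonzero rational differential on $S$. I pass to the smooth projective completion $\bar S$ of $S$, on which $df$ has only finitely many zeros. Hence the set $R = \{ s \in S : df(s) = 0\}$ of ramification points of $f$ on $S$ is finite; this is the only finiteness input. For each $s \in R$ such that $f(s)$ is a root of unity, let $m_s$ be its exact order, and let $M = \{ m_s \}$, a finite set.

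\emph{Verification.} By the first paragraph, a non-transversal intersection point for $nP$ is exactly a point $s \in R$ with $f(s)^n = 1$. Such an $s$ exists if and only if $f(s)$ is a root of unity whose order $m_s$ divides $n$, that is, if and only if $n$ is divisible by some element of $M$. This gives the claim.

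\emph{Remarks.} There is no real obstacle here. The one point needing care is the interpretation of ``transversal'' in the degenerate constant case, together with the observation that in characteristic $0$ the factor $n$ cannot kill the derivative. One could alternatively phrase the argument via the Gauss map and \cite[Theorem 1.4]{Maurin23}, as suggested in the text, but the direct computation above already suffices.
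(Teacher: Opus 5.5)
Your proof is correct and is essentially the argument the paper points to. The paper defers to the constant case of Ulmer--Urz\'ua and carries out exactly your computation in its Example for polynomial $f$: tangency at $s$ forces $df(s)=0$, there are finitely many such critical points, and $M$ consists of the orders of those values $f(s)$ that are roots of unity. Your version shows this works verbatim for any invertible regular function on $S$, so no unlikely-intersection input such as March\'e--Maurin is needed; your treatment of constant $f$ (where $nP=O$ for $m\mid n$) correctly settles a degenerate case the statement leaves implicit.
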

This is analogous to the constant case of \cite[Theorem 1.4]{UU21}, and the same proof applies (see also \cite[Paragraph 1.2]{UU20}).

\begin{example}
The conclusion of Theorem \ref{thm:toric_divisibility_seq} is very natural, and in some cases can be proved in elementary terms, as this example shows. 

Let $f(t)\in \C[t]$ be a nonconstant polynomial; then, $f(t)\in \Gm(\C(t))$ is a rational function. For every $n \ge 1$ we can then define the divisor $D_{f^n}=\sigma_{f^n}^*(1_{\Gm})$, where $\sigma_{f^n}$ denotes the section of $\mathcal G$ corresponding to the rational function $f^n$ and $1_{\Gm}$ is the identity section corresponding to the point $1 \in \Gm(\C(t))$.
In this setting, the divisor $D_{f^n}$ is reduced if and only if the polynomial $f(t)^n-1$ has only simple roots.

We notice that, for every $n\ge 1$, we can factorize $f(t)^n-1$ as
$$ f(t)^n-1= \prod_{j=0}^{n-1} (f(t)-\zeta_n^j), $$
with $\zeta_n=e^{2\pi i/n}$. 
It is easy to see that for every $j \neq j'$, the factors $f(t)-\zeta_n^j$ and $f(t)-\zeta_n^{j'}$ do not have common roots, hence $f(t)^n-1$ has a multiple root $t_0 \in \C$ if and only if $t_0$ is a multiple root of one of the factors. 
On the other hand, if $t_0$ is a multiple root of some $f(t)-\zeta_n^j$, then $f'(t_0)=0$. 
Consequently, the multiple roots of the polynomials of the form $f(t)^n-1$ are exactly the $t_0\in \C$ such that $f'(t_0)=0$ and there exists $n\ge 1$ and $0 \le j \le n-1$ such that $f(t_0)=e^{2\pi j/n}$. 
Notice that these elements are $\le \deg f-1$. For each of these elements, denote by $m_{t_0}$ the smallest $m \ge 1$ such that $f'(t_0)=0$ and $f(t_0)=e^{2\pi j/m_{t_0}}$ with $0 \le j \le m_{t_0}$; then, $t_0$ will be a multiple root of $f(t)^n-1$ for every $n$ which is a positive multiple of $m_{t_0}$. 
This means that $D_{nP}$ is reduced if and only if $n$ is not divisible by any of these $m_{t_0}$, as implied by Theorem \ref{thm:toric_divisibility_seq}.
\end{example}

Using Theorem \ref{thm:split_semiab} we can prove that a similar statement holds for $D_{nP, Q}$, as done in \cite{Ottolini} where the author considers the case of sections in non isotrivial elliptic schemes. 

Before stating and proving our result for $D_{nP, Q}$, we will need a preliminary result.

\begin{thm}
\label{Application_thm_1}
Let $G= \Gm$ or $G=E$ for an elliptic curve $E$ defined over a number field $K$, let $S$ be a smooth curve and consider the constant scheme $\Gcal := G \times S$ with structural morphism $\pi: \Gcal \to S$. 
Let $P, Q$ be sections $S \to \Gcal$.
If there exist infinitely many $s \in S(\overline{\Q})$ for which there exists $m_s \in \Z$ such that $m_s P(s) = Q(s)$ tangentially, then, for some $m \in \Z$, we have that $mP=Q$ identically on $S$.
\end{thm}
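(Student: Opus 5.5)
Since $\Gcal = G\times S$ is constant, a section is a morphism $S \to G$. I would therefore work with the curve
\[
\Ccal := \overline{\{(P(s), Q(s)) : s \in S\}} \subseteq G^2 ,
\]
the closure of the image of $\sigma = (P,Q)\colon S \to G^2$. If $\sigma$ is constant, then $P$ and $Q$ are constant. The condition $m_sP(s)=Q(s)$ then says that the constant point $Q$ is a multiple of the constant point $P$, so $mP=Q$ for some $m$ and we are done. (Tangency is automatic in this case, since both sections are horizontal.) So assume $\Ccal$ is an irreducible curve, defined over a number field after enlarging $K$.

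The key translation is the following. The condition "$m_sP(s)=Q(s)$ tangentially" means that the graphs of $m_sP$ and $Q$ meet at $s$ with multiplicity at least $2$, i.e. the derivative of $m_sP-Q$ (or of $P^{m_s}/Q$ in the toric case) vanishes at $s$. Equivalently, the point $\sigma(s)$ lies in the codimension-one subgroup $H_{m_s}=\{(x,y): m_sx=y\}$ (resp. $y=x^{m_s}$), and $d\sigma_s(T_sS)\subseteq T_{\sigma(s)}H_{m_s}$. At points $s$ where $d\sigma_s\neq 0$ and $\sigma(s)$ is a smooth point of $\Ccal$, this says exactly that $\Ccal$ is tangent to $H_{m_s}$ at $\sigma(s)$. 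Where $d\sigma_s=0$, we are at a ramification point of $S\to\Ccal$, and there are only finitely many of these; likewise $\Ccal$ has only finitely many singular points. We may also discard the case where $P$ is constant and non-torsion, since then $m_sP=Q(s)$ forces $Q(s)$ into a countable set; this is handled similarly. The same kind of degeneracy is harmless below.

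Next, infinitely many $s$ give points $\sigma(s)\in\Ccal^{[1,sing]}$, and these points are infinitely many. The only way to get infinitely many $s$ but finitely many points is if $\sigma$ is constant, and that case was handled above. Here one uses that the fibres of $S\to\Ccal$ are finite. Theorem~\ref{thm:main} with $n=2$ says $\Ccal^{[1,sing]}$ is finite whenever $\Ccal$ is not contained in a proper algebraic subgroup of $G^2$. Hence $\Ccal$ lies in a proper algebraic subgroup, i.e. in a one-dimensional subgroup $B\subseteq G^2$ whose identity component $B_0$ is a curve.

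The main obstacle is to extract from this containment the identity $mP=Q$ for a single integer $m$. I would argue as follows. $B_0$ is cut out by a relation $ax+by=0$ with $(a,b)$ primitive (multiplicatively: $x^ay^b=1$), with $a,b\in\End(G)$, and $\Ccal$ lies in a torsion translate of $B_0$. Infinitely many points $\sigma(s)$ also lie in the subgroups $H_{m_s}$, which are cut out by $m_sx-y=0$. If $H_{m_s}$ and $B$ are not commensurable, their intersection is finite, and those finitely many points cannot account for infinitely many values of $s$ unless $\sigma(s)$ repeats. So, for all but finitely many $s$, the subgroups $H_{m_s}$ and $B$ share an identity component. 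This forces the ratio $(a:b)$ to be $(m_s:-1)$, so $b$ is a unit and $m:=m_s$ is independent of $s$. Now $\Ccal$ lies in a torsion translate $\omega+H_m$ and meets $H_m$. Since $\Ccal$ is irreducible, it lies in a single component, so $\Ccal\subseteq H_m$. This means $mP=Q$ identically. In the CM case, $b$ being a unit means $\pm1$ or a root of unity, so $m$ may a priori be an endomorphism. But the statement requires $m_s\in\Z$, and $H_{m_s}$ with $m_s\in\Z$ forces $m\in\Z$, which settles this case.
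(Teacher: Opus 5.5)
\paragraph{A gap in the last step.} Your reduction to $\Ccal=\overline{\sigma(S)}\subseteq G^2$ is fine. So is the translation of ``$m_sP(s)=Q(s)$ tangentially'' into tangency of $\Ccal$ with $H_{m_s}$ at smooth, unramified points. Applying Theorem~\ref{thm:main} to place $\Ccal$ inside a one-dimensional subgroup $B$ is also correct.

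The gap is the next step: ``If $H_{m_s}$ and $B$ are not commensurable, their intersection is finite, and those finitely many points cannot account for infinitely many values of $s$.'' This only works if $m_s$ takes finitely many values. When $m_s$ is unbounded, there is no uniform bound on $|H_m\cap B|$, and the union of these finite sets over infinitely many $m$ can be infinite. The unbounded case is exactly the one the paper's proof has to work for, so this is not cosmetic.

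The step uses only incidence, not tangency, and in that form its conclusion is false. Take $P$ non-constant and $Q\equiv\tau$ a non-zero torsion point. Then:
\begin{itemize}
\item $\Ccal=G\times\{\tau\}$ lies in the proper subgroup $G\times\langle\tau\rangle$;
\item there are infinitely many $s$ with $m_sP(s)=\tau$, with $m_s$ unbounded;
\item no $H_{m_s}$ shares the identity component $G\times\{O\}$ of $B$, since that would need $m_s=0$;
\item indeed no $m$ satisfies $mP=Q$.
\end{itemize}

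\paragraph{The repair.} Use tangency at exactly this point. Since $\Ccal$ is an irreducible curve in the one-dimensional group $B$, it is a translate $\omega+B_0$. Hence at every point its tangent line, transported to the origin, is $\mathrm{Lie}(B_0)$. Tangency of $\Ccal$ to $H_{m_s}$ at $\sigma(s)$ then gives $\mathrm{Lie}(B_0)=\mathrm{Lie}(H_{m_s})=\{z_2=m_sz_1\}$, and this determines $m_s$. So all the $m_s$ at the infinitely many $s$ you retained equal a single integer $m$. Then $mP-Q$ (resp.\ $P^m/Q$) vanishes at infinitely many points of the irreducible curve $S$, hence identically; equivalently, your containment $\Ccal\subseteq H_m$ now goes through.

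With this fix your argument is correct and somewhat more direct than the paper's. The paper splits into bounded and unbounded $m_{s_n}$. In the unbounded case it extracts a relation $aP=bQ$ from Theorem~\ref{thm:main}, deduces that $(a-bm_n)P(s_n)=O$ tangentially for infinitely many $n$, hence that $P$ and then $Q$ are identically torsion, and concludes from there.

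\paragraph{Minor points.}
\begin{itemize}
\item In the CM case $B_0$ need not be cut out by a single equation $ax+by=0$ (see the paper's $\Z[\sqrt{-5}]$ example), but only $\mathrm{Lie}(B_0)$ matters.
\item Your countability remark about $P$ constant and non-torsion is not an argument, since $S(\Qbar)$ is itself countable. The case needs no separate treatment: Theorem~\ref{thm:main} covers it, and directly, tangency there forces $dQ_s=0$.
\end{itemize}
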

\begin{proof}
Consider $(s_n)_n$ an infinite sequence of pairwise distinct $s_n \in S(\overline{\Q})$ for which there exists $m_{s_n} \in \Z$ such that
\begin{equation}
\label{tangent_intersection_section_eqn}
m_{s_n} P(s_n) = Q(s_n) \text{  tangentially}.
\end{equation}

We have two cases: either the $m_{s_n}$ are bounded, or they are unbounded.
We will treat these two cases separately.

If the $m_{s_n}$ are bounded, then, up to taking a subsequence, we have that for a fixed $m$ there exist infinitely many $s \in S(\overline{\Q})$ where $m P(s) = Q(s)$, from which the claim follows.

The case in which the $m_{s_n}$ are unbounded is left to prove, so we will be assuming to be in that case.
From Theorem \ref{thm:main} it follows that, for some $a, b \in \Z$ not both zero,
\begin{equation}
\label{relation_identical_section_eqn}
a P = b Q
\end{equation}
identically on $S$.
Combining this with \eqref{tangent_intersection_section_eqn}, we find that $P(s_n)$ is torsion for infinitely many $n$, since we have
\begin{equation}
\label{p_section_is_torsion_eqn}
O(s_n)=a P(s_n) - b Q(s_n) = a P(s_n) - b m_n P(s_n) = (a-b m_n)P(s_n)
\end{equation}
and $(a- b m_n)$ is non-zero infinitely often, as $m_n$ is unbounded.
Moreover, since \eqref{tangent_intersection_section_eqn} is true tangentially and \eqref{relation_identical_section_eqn} is true identically, we get that \eqref{p_section_is_torsion_eqn} also holds tangentially.
Again by Theorem \ref{thm:main}, $P$ must be identically torsion, and so the same is true for $Q$, by \eqref{relation_identical_section_eqn}.
Since by assumption there is a certain $s \in S(\Qbar)$ and $m \in \Z$ such that $m P(s) = Q$, such relation must hold for all $s \in S(\Qbar)$, and hence identically on $S$.
\end{proof}
\begin{remark}
Notice that in the case $G=\Gm$, the result by Marché and Maurin \cite{Maurin23} is sufficient to prove the previous theorem.
\end{remark}

\begin{corollary}
\label{coroll_finite_set}
    Let $G, S, \Gcal, \pi$ be as in the previous theorem, and let $P, Q$ be sections $S \to \Gcal$ such that no multiple of $P$ is identically equal to $Q$.
    Let $S'$ be the set of $s \in S$ such that, for some $n \in \N$, $2s \leq D_{nP, Q}$.
    Then, $S'$ is a finite set.
\end{corollary}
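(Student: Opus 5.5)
The argument is by contradiction, reducing to Theorem \ref{Application_thm_1}. First we unwind the condition $2s \leq D_{nP,Q}$. Since $\Gcal = G\times S$ is a constant scheme, sections $S\to\Gcal$ are just morphisms $S\to G$. By \eqref{D_nP_Q_eqation}, $D_{nP,Q}=\pi_*(nP\cap Q)$, so the coefficient of $s$ in $D_{nP,Q}$ is the local intersection multiplicity at $(nP(s),s)$ of the two curves $nP(S)$ and $Q(S)$ inside the surface $\Gcal$. Both curves are images of sections, so they are smooth and transverse to the fibres. Hence this multiplicity is at least $2$ exactly when $nP(s)=Q(s)$ and the two tangent lines coincide, that is, when $nP$ and $Q$ meet tangentially at $s$. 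Thus $S'$ is precisely the set of $s$ for which some $n\in\N$ has $nP(s)=Q(s)$ tangentially.

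Next, assume $S'$ is infinite. Each such $s$ must be algebraic, or else we work over the field of definition; I would note that everything is defined over $\Qbar$, as in Theorem \ref{Application_thm_1}, so it is enough to take $s\in S(\Qbar)$. Then there are infinitely many $s\in S(\Qbar)$ with some $m_s\in\Z$ (in fact $m_s=n\in\N$) such that $m_sP(s)=Q(s)$ tangentially. Theorem \ref{Application_thm_1} then gives an $m\in\Z$ with $mP=Q$ identically on $S$.

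The remaining step, and the only delicate point, is to get from $m\in\Z$ to a contradiction, since the hypothesis only excludes $nP\equiv Q$ for $n\in\N$. If $m\ge 1$ we are done. If $m\le 0$, I would look back at the proof of Theorem \ref{Application_thm_1}. In the bounded case, the $m$ produced is one of the $m_{s_n}$, which are natural numbers here, so it is positive. In the unbounded case, $P$ is identically torsion, say of order $N$. Then $mP=(m+tN)P$ for every $t$, and choosing $t$ large makes the multiplier positive, so $nP=Q$ identically for some $n\in\N$. Either way this contradicts the hypothesis, so $S'$ is finite. Besides this sign issue, the main thing to check carefully is the identification of ``coefficient $\ge 2$'' with ``tangential intersection'', which holds because both curves are graphs of sections.
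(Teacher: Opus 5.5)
Your proposal is correct and follows the paper's implicit argument: the paper gives no separate proof of this corollary, which is simply Theorem \ref{Application_thm_1} applied by contradiction once $2s \le D_{nP,Q}$ is read as ``$nP$ and $Q$ meet tangentially over $s$'' (the same identification the paper makes in the proof of Theorem \ref{thm:elliptic_div}). Your extra check that the integer $m$ produced by Theorem \ref{Application_thm_1} can be taken positive is sound and covers a detail the paper passes over. In the bounded case $m$ is one of the $m_{s_n}\in\N$, and in the unbounded case $P$ is torsion, so $m$ can be shifted by multiples of its order.
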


Before stating and proving the next theorem, we introduce the following notation.
For an effective divisor $D$ of a curve $S$ we call the radical of $D$, denoted as $\Rad(D)$, the largest reduced effective divisor $E$ of $S$ such that $E \leq D$.
More concretely, if $D = \sum_{i \in I} a_i t_i$ with $t_i$ points of $S$ and $a_i > 0$ for all $i \in I$, then
\[
 \Rad(D) = \sum_{i \in I} t_i.
\]
We call the non-reduced part of $D$, denoted as $D^{\mathrm{nonRed}}$, the divisor $D - \Rad(D)$. 
Note that this is always an effective divisor, although possibly the zero one, in case $D$ is reduced. 

\begin{thm}
\label{thm:elliptic_div}
Let $G= \Gm$ or $G=E$ for an elliptic curve $E$ defined over a number field $K$, let $S$ be a smooth curve and consider the constant scheme $\Gcal := G \times S$ with structural morphism $\pi: \Gcal \to S$. 
Let $P, Q$ be sections $S \to \Gcal$ such that no multiple of $P$ is identically equal to $Q$.
Let $D_{nP, Q}$ be as in \eqref{D_nP_Q_eqation}.
Then:
\begin{enumerate}
\item there exists a divisor $D$ of $S$, independent of $n$, such that $D_{nP, Q}^{\mathrm{nonRed}} \leq D$;
\item the set 
\[
\Xi = \{ n \in \N \st D_{nP, Q} \text{ is not reduced} \}
\]
is the union of a finite set and a finite union of arithmetic progressions.
\end{enumerate}
\end{thm}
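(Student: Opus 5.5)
The plan is to deduce both parts from Corollary \ref{coroll_finite_set}, together with a local analysis at each of the finitely many points where a multiplicity $\ge 2$ can occur. Write $D_{nP,Q} = \sum_{s} \mathrm{ord}_s(nP - Q)\, s$, where $\mathrm{ord}_s(nP-Q)$ is the intersection multiplicity of the sections $nP$ and $Q$ over $s$. A point $s$ appears in $D^{\mathrm{nonRed}}_{nP,Q}$ exactly when this multiplicity is at least $2$, that is, when $2s \le D_{nP,Q}$. By Corollary \ref{coroll_finite_set}, the set $S'$ of such $s$, taken over all $n$, is finite. So it suffices to control, for each fixed $s \in S'$, the multiplicities $\mathrm{ord}_s(nP-Q)$ as $n$ varies, and to describe the set $\Xi_s$ of $n$ for which this multiplicity is at least $2$. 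Then $\Xi = \bigcup_{s \in S'} \Xi_s$ is a finite union.

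Fix $s \in S'$ and work locally. Choose a uniformizer $t$ at $s$ and a local analytic chart of $G$ around the relevant point; logarithmically, use $\log$ for $\Gm$ and the elliptic logarithm for $E$. The condition $nP(s)=Q(s)$ means that $nP(s)-Q(s)$ is the identity. If $nP(s)=Q(s)$ for two distinct values $n_1, n_2$, then $(n_1-n_2)P(s)=O$, so $P(s)$ is torsion of some exact order $N_s$. Otherwise at most one $n$ satisfies $nP(s)=Q(s)$. In the first case, the set of $n$ with $nP(s)=Q(s)$ is a residue class $n \equiv n_0 \pmod{N_s}$. For such $n$, write $n = n_0 + jN_s$. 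Locally the difference $nP-Q$ equals $(n_0P-Q) + j\,(N_sP)$, and both summands vanish at $s$. In the analytic chart, $\ell(n) := \log(nP-Q) = \log(n_0P-Q) + j\log(N_sP)$ is a sum of two power series in $t$ vanishing at $0$, affine in $j$. The order of vanishing of $\ell(n)$ in $t$ is $\mathrm{ord}_s(nP-Q)$. Its coefficient of $t$ is $\alpha + j\beta$, where $\alpha,\beta$ are the linear coefficients of the two series.

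The rest is a case split on $\alpha$ and $\beta$. If $\beta \neq 0$, then $\alpha + j\beta = 0$ for at most one $j$. For that single $j$ the order is bounded, since the series is nonzero: if $\ell(n)$ vanished identically for some $n$, then $nP=Q$ near $s$ and hence identically, which is excluded. In this case $\Xi_s$ is finite. If $\beta = 0$ and $\alpha \neq 0$, then $\Xi_s$ is empty. If $\alpha = \beta = 0$, then $\Xi_s$ is the whole progression $n_0 + N_s\Z_{\ge 0}$, which is an arithmetic progression. Note that $\beta=0$ with the order of $\log(N_sP)$ growing is allowed, but $N_sP$ is not identically $O$ unless $P$ is torsion. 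In that degenerate case, the whole statement reduces to finitely many sections $nP$, each different from $Q$, and both parts are trivial.

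For part (1), the order of $\alpha_\ast + j\beta_\ast(t)$, with $\alpha_\ast$ and $\beta_\ast$ the two power series, is bounded uniformly in $j$. Indeed, for $j \neq j'$ the two series differ by $(j-j')\log(N_sP)$, whose order $e_s$ is finite. So at most one $j$ can have order exceeding $e_s$, and that exceptional order is finite. This gives a bound $c_s$ on $\mathrm{ord}_s(nP-Q)$ for all $n$. In the non-torsion case only one $n$ contributes at $s$, so the bound is immediate. Then $D = \sum_{s\in S'} c_s\, s$ works. The main obstacle I expect is making this local expansion rigorous uniformly in both the toric and elliptic settings, i.e. justifying that intersection multiplicity equals the order of vanishing of $\ell(n)$ in a chart centred at the identity, and handling base points of $S'$ where $P$ or $Q$ has poles. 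I would first try to reduce to the chart by translating by $-Q$, which turns the problem into intersecting $nP-Q$ with the zero section.
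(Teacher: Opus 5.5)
Your proof is correct, but it takes a different route from the paper's. The common ground is the deep input: both arguments use the finiteness of $S'$ from Corollary \ref{coroll_finite_set} and then work one point $s\in S'$ at a time.

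From there the paper relies on citations. For part (1) it quotes \cite[Lemma 4.5]{Ghioca2018} (or \cite[Lemma 4.4]{BCT2024}) for a uniform bound on the multiplicities. For part (2) it takes the least $n_0\in\Xi_s$ and notes that every other $n\in\Xi_s$ makes $(n-n_0)P$ tangent to the zero section over $s$. It then invokes \cite[Thm 1.4]{UU21} in the elliptic case and Theorem \ref{thm:toric_divisibility_seq} (i.e. March\'e--Maurin) in the toric case.

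You instead translate by $-Q$ and linearize with the logarithm at the identity. Along the residue class $n\equiv n_0 \pmod{N_s}$ the germ of $nP-Q$ becomes $A(t)+jB(t)$. One elementary computation then gives both the uniform bound, via the order $e_s$ of $B=\log(N_sP)$, and the structure of $\Xi_s$.

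What your route buys:
\begin{itemize}
\item It is self-contained and treats $\Gm$ and $E$ uniformly.
\item It avoids having to localize the global statements of \cite{UU21} and March\'e--Maurin at a fixed point $s$.
\item It gives sharper information: each $\Xi_s$ is empty, a single point, or exactly one progression $n_0+N_s\Z_{\ge 0}$ with $N_s$ the order of $P(s)$, and the bound $c_s$ is explicit.
\end{itemize}
The paper's route is shorter and ties the result to the existing divisibility-sequence literature.

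The technical worry at the end of your proposal is harmless. Sections of the constant scheme are morphisms $S\to G$, so no poles occur. Translation by $-Q$ is an $S$-automorphism of $\Gcal$ and preserves intersection multiplicities. Since $\log$ (or the formal logarithm, if you want to avoid analytic arguments) is a local parameter at the identity, the multiplicity of $nP-Q$ with $O$ over $s$ is exactly $\mathrm{ord}_t\,\ell(n)$.
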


\begin{proof}
For $s \in \mathcal{C}$ let $\Xi_s = \{ n \in \N \st 2s \leq D_{nP, Q} \}$, that is, the set of $n\in \N$ such that $nP$ and $Q$ intersect tangentially in $s$.
Clearly, $\Xi = \bigcup_{s \in \mathcal{C}} \Xi_s$.
Moreover, $\Xi_s$ is nonempty exactly whenever $s$ lies in the finite set $S'$ of Corollary \ref{coroll_finite_set}. 
In particular, the support of $D_{nP, Q}^{\mathrm{nonRed}}$ is contained in such $S'$.
To get part (1), it is now enough to apply \cite[Lemma 4.5]{Ghioca2018} (see also \cite[Lemma 4.4]{BCT2024}).

For part (2), we need to show that, for each $s \in S$, we have that $\Xi_s$ is either empty, a singleton, or a finite union of arithmetic progressions.
Suppose that $\Xi_s$ is nonempty, and let $n_0$ be its smallest element. For any other $n \in \Xi_s$ we have that both $n_0 P$ and $n P$ intersect $Q$ tangentially over $s$. Hence, $(n - n_0) P$ intersects the zero section tangentially over $s$. 
By \cite[Thm 1.4]{UU21} in the elliptic case, and Theorem \ref{thm:toric_divisibility_seq}, we can deduce that $\Xi_s - n_0$ is a finite union of arithmetic progressions starting from 0, so $\Xi_s$ itself is a finite union of arithmetic progressions starting from $n_0$.
\end{proof}

\begin{remark}
    The result of part (1) of the previous theorem also holds in the case of non-isotrivial elliptic scheme, with the same proof, although it is not stated explicitly in \cite{Ottolini}.
\end{remark}

\begin{remark}
We point out that the result of part (2) of Theorem \ref{thm:elliptic_div} is weaker than what Ulmer and Urz{\'u}a get in the special case $Q = O$ (\cite[Thm 1.4]{UU21}) (see also Theorem \ref{thm:toric_divisibility_seq}). 
There, the authors prove that $\{ n \in \N \st D_{nP} \text{ is not reduced} \}$ is the finite union of arithmetic progressions \emph{starting from 0}, implying that if $D_{P}$ is reduced, then $D_{nP}$ is reduced infinitely often.
In our case, the arithmetic progressions may not start from 0, so Theorem \ref{thm:elliptic_div} does not provide an easy method to check whether $D_{nP, Q}$ is reduced infinitely often.
\end{remark}

\section*{Acknowledgements}

The authors thanks Luca Ferrigno, Vincenzo Mantova, David Masser, Jonathan Pila, Amos Turchet and Umberto Zannier for very fruitful conversations on the topic.
The first-named author was supported by the PhD program in Mathematics at the University of Oxford.
The second-named author was supported by the PRIN 2022 project \textit{“Semi-abelian varieties, Galois representations and related Diophantine problems”}.
The third-name author was supported by the Ph.D. program in Mathematics at the University of Rome ``Tor Vergata'', and by the SNSF project \textit{``Atypical and typical intersections in semiabelian varieties''} (grant number 10006332).
The second and third authors are parts of the INdAM group GNSAGA.

\bibliographystyle{alpha-abbr}
\bibliography{biblio} 

\end{document}